\documentclass[
preprint, 3p, % <preprint, review, final> <1p, 3p, 5p> <twocolumn>
number, % <authoryear, number>
sort&compress,
% draft
]{elsarticle}
\pdfoutput=1

\usepackage[utf8]{luainputenc}
\usepackage[english]{babel}
\usepackage{csquotes}

\usepackage[plainpages=false,pdfpagelabels,hidelinks,unicode]{hyperref}

\usepackage{lipsum}
\usepackage{amsfonts}
\usepackage{graphicx}
\usepackage{epstopdf}

% math packages
\usepackage{amsmath}
\allowdisplaybreaks
\usepackage{amssymb}
\usepackage{commath}
\usepackage{mathtools}
\usepackage{bbm}

\usepackage{siunitx}
\usepackage{adjustbox}

\usepackage{algorithm}
\usepackage[noend]{algpseudocode}

\usepackage[normalem]{ulem}

% amsthm
\usepackage{amsthm}
%\swapnumbers
\theoremstyle{plain}
  \newtheorem{theorem}{Theorem} 
  \newtheorem{lemma}[theorem]{Lemma} 
	
\theoremstyle{definition}
  
  \newtheorem{remark}[theorem]{Remark}
  \newtheorem{example}[theorem]{Example}
  
% figures
\usepackage{color}
\usepackage{graphicx}
\usepackage[small]{caption}
\usepackage{subcaption}

\ifx\useTikzForPlotting\undefined
\else
  \usepackage{pgfplots}
  \pgfplotsset{compat=1.11}
  \usetikzlibrary{external}
  \tikzexternalize[prefix=figures/]
\fi

% suppress "multiple pdfs with page group included in a single page"
% http://tex.stackexchange.com/questions/198586/conditional-based-on-the-version-of-pdflatex
%\begingroup\expandafter\expandafter\expandafter\endgroup
%\expandafter\ifx\csname pdfsuppresswarningpagegroup\endcsname\relax
%\else
%  \pdfsuppresswarningpagegroup=1\relax
%\fi

% line numbering
% see http://phaseportrait.blogspot.de/2007/08/lineno-and-amsmath-compatibility.html
%\usepackage{lineno}
%\newcommand*\patchAmsMathEnvironmentForLineno[1]{%
%  \expandafter\let\csname old#1\expandafter\endcsname\csname #1\endcsname
%  \expandafter\let\csname oldend#1\expandafter\endcsname\csname end#1\endcsname
%  \renewenvironment{#1}%
%     {\linenomath\csname old#1\endcsname}%
%     {\csname oldend#1\endcsname\endlinenomath}}% 
%\newcommand*\patchBothAmsMathEnvironmentsForLineno[1]{%
%  \patchAmsMathEnvironmentForLineno{#1}%
%  \patchAmsMathEnvironmentForLineno{#1*}}%
%\AtBeginDocument{%
%\patchBothAmsMathEnvironmentsForLineno{equation}%
%\patchBothAmsMathEnvironmentsForLineno{align}%
%\patchBothAmsMathEnvironmentsForLineno{flalign}%
%\patchBothAmsMathEnvironmentsForLineno{alignat}%
%\patchBothAmsMathEnvironmentsForLineno{gather}%
%\patchBothAmsMathEnvironmentsForLineno{multline}%
%}
%\linenumbers

% tables
\usepackage{booktabs}
\usepackage{rotating}
\usepackage{multirow}

% other
\usepackage{multicol}
\usepackage{enumitem}

% for definitions
\usepackage{calc}
\usepackage{xparse}

% definitions 

\renewcommand{\vec}[1]{\underline{#1}}
\NewDocumentCommand{\mat}{mo}{%
  \IfValueTF{#2}{%
    \underline{\underline{#1}}{#2}
  }{%
    \underline{\underline{#1}}\,
  }%
}

\renewcommand{\d}{\mathrm{d}} 
\newcommand{\intd}{\, \mathrm{d}}

\newcommand{\fnum}{f^{\mathrm{num}}}

\newcommand{\Oref}{\Omega_{\mathrm{ref}}}

\renewcommand{\epsilon}{\varepsilon}
\renewcommand{\phi}{\varphi}
\renewcommand{\rho}{\varrho}
\newcommand{\N}{\mathbb{N}}
\newcommand{\R}{\mathbb{R}}

% mean and jump
\newsavebox{\DelimiterBox}
\newlength{\DelimiterHeight}
\newlength{\DelimiterDepth}
\newsavebox{\ArgumentBox}
\newlength{\ArgumentHeight}
\newlength{\ArgumentDepth}
\newlength{\ResizedDelimiterHeight}
\newlength{\ResizedDelimiterDepth}

\begin{document}

\begin{frontmatter}

\title{Shock Capturing by Bernstein Polynomials for Scalar Conservation Laws}

\author[myu]{Jan Glaubitz\corref{cor1}}
\ead{j.glaubitz@tu-bs.de}

\cortext[cor1]{Corresponding author}

\address[myu]{
Institute for Computational Mathematics, TU Braunschweig, Universitaetsplatz 2, 38106 Braunschweig,   
Germany.
}

\begin{abstract}
  A main disadvantage of many high-order methods for hyperbolic conservation laws lies in the famous 
Gibbs--Wilbraham phenomenon, once discontinuities appear in the solution.
Due to the Gibbs--Wilbraham phenomenon, the numerical approximation will be polluted by spurious 
oscillations, which produce unphysical numerical solutions and might finally blow up the 
computation. 
In this work, we propose a new shock capturing procedure to stabilise high-order spectral element approximations. 
The procedure consists of going over from the original (polluted) approximation to a convex combination of the 
original approximation and its Bernstein reconstruction, yielding a stabilised approximation. 
The coefficient in the convex combination, and therefore the procedure, is steered by a discontinuity sensor and is 
only activated in troubled elements. 
Building up on classical Bernstein operators, we are thus able to prove that the resulting Bernstein procedure is 
total variation diminishing and preserves monotone (shock) profiles. 
Further, the procedure can be modified to not just preserve but also to enforce certain bounds for the solution, 
such as positivity. 
In contrast to other shock capturing methods, e.\,g.\ artificial viscosity methods, the new procedure does not reduce 
the time step or CFL condition and can be easily and efficiently implemented into any existing code.
Numerical tests demonstrate that the proposed shock-capturing procedure is able to stabilise 
and enhance spectral element approximations in the presence of shocks. 
\end{abstract}

\begin{keyword}
  hyperbolic conservation laws
  \sep spectral/$hp$ element methods
  \sep sub-cell shock capturing
  \sep Bernstein polynomials
  \sep Gibbs phenomenon 
  \sep total variation diminishing
\end{keyword}

\end{frontmatter}

\section{Introduction}
\label{sec:introduction}

% Hyperbolic conservation laws
In this work, we introduce a shock capturing procedure for spectral element (SE) approximations of scalar hyperbolic 
conservation laws 
\begin{equation}\label{eq:cl}
    \partial_t u + \partial_x f(u) = 0, \quad x \in \Omega \subset \R,  
\end{equation} 
with appropriate initial and boundary conditions. 
Here, $u(t,x)$ is the unknown function and $f(u)$ is a $C^1$ flux function. 
We assume that the initial data $u_0$ is a function of bounded total variation in $\Omega$. 
Equation \eqref{eq:cl} might as well be written in its quasi-linear form 
\begin{equation}
    \partial_t u + a(u) \partial_x u = 0
\end{equation} 
with $a(u) = f'(u)$. 
We further recall that solutions of \eqref{eq:cl} may develop spontaneous jump discontinuities (shock waves and contact discontinuities) even when the initial data are smooth \cite{lax1973hyperbolic}. 
This important discovery has first been made by Riemann \cite{riemann1860fortpflanzung}.
Hence, the more general class of weak solutions is admitted, where \eqref{eq:cl} is satisfied in the sense of distribution theory. 
Since there are many possible weak solutions, however, equation \eqref{eq:cl} is augmented with an 
additional entropy condition, requiring 
\begin{equation}\label{eq:entr-cond}
    \partial_t U(u) + \partial_x F(u) \leq 0 
\end{equation}
to hold. 
$U$ is an entropy function and $F$ is a corresponding entropy flux satisfying $U' f' = F'$. 
A strict inequality in \eqref{eq:entr-cond} reflects the existence of physically reasonable shock waves. 

% Spectral element approximations
Next, we consider spectral/$hp$ element approximations of \eqref{eq:cl}. 
Also see \cite{karniadakis2013spectral} and references therein. 
SE approximations have been introduced in 1984 by Patera \cite{patera1984spectral} and can be interpreted 
as a formulation of finite elements (FE) that uses piecewise polynomials of high degrees. 
SE methods thus combine the advantages of (Galerkin) spectral methods with those of FEs by a simple 
application of the spectral method to subdomains (elements) $\Omega_i$ of $\Omega$. 
The elements $\Omega_i$ are mapped to a reference element $\Oref$ and all computations are carried out there. 
In one space dimension, this reference element is typically given by $\Oref=[-1,1]$. 
Thus, the SE approximation is based on a polynomial approximation of degree at most $N$, expressed w.\,r.\,t.\ some 
nodal or modal basis of $\mathbb{P}_N(\Oref)$. 
This polynomial approximation, in particular, is used to extrapolate the solution to the element boundaries (if these values are not already given as coefficients in a nodal basis). 
Subsequently, common numerical fluxes $\fnum$ are computed at each element boundary 
\cite{toro2013riemann}. 
Numerical fluxes allow information to cross element boundaries and are often essential for the stability of the method. 
They are incorporated into the approximation by some correction procedure at the boundaries. 
In discontinuous Galerkin (DG) methods, this is done by applying integration by parts to a weak form of \eqref{eq:cl}, 
see \cite{hesthaven2007nodal}. 
In flux reconstruction (FR) methods, which are based on the strong form of \eqref{eq:cl}, this is done by applying 
correction functions, see \cite{huynh2007flux}. 
Finally, $\partial_x f(u)$ is approximated using exact differentiation for the polynomial approximation. 

% Gibbs phenomenon and trouble for jumps 
Convergence of the resulting SE method is either obtained by increasing the degree of the polynomials 
($p$-refinement) or by increasing the number of elements ($h$-refinement). 
SE methods are known to provide highly accurate approximations in smooth regions. 
But also from a computational point of view, the main difficulty of solving hyperbolic conservation laws is that discontinuities may arise:  
Resulting from the famous Gibbs--Wilbraham phenomenon \cite{hewitt1979gibbs}, the polynomial approximation is polluted 
by spurious oscillations, which might produce unphysical numerical solutions and finally blow up the computation. 
% Shock capturing: Artificial viscosity and l^1 regularisation 
To overcome these problems, many researchers have extended shock capturing procedures from finite difference (FD) 
as well as finite volume (FV) methods to (high-order) SE schemes. 
The idea behind many of these procedures is to add dissipation to the numerical solution. 
This idea dates back to the pioneering work \cite{vonneumann1950method} of von Neumann and Richtmyer during the 
Manhattan project in the 1940's at Los Almos National Laboratory, where they constructed stable FD schemes for the 
equations of hydrodynamics by including artificial viscosity (AV) terms. 
Since then, AV methods have attracted the interest of many researchers and were investigated 
also for SE approximations in a large number of works 
\cite{persson2006sub,klockner2011viscous,glaubitz2018application,ranocha2018stability,glaubitz2019smooth}. 
Despite providing a robust and accurate way to capture (shock) discontinuities, AV terms are 
not trivial to include in SE approximations. 
Typically, they are nonlinear and consist of higher (second and fourth order) derivatives. 
Another drawback arises from the fact that AV terms can introduce additional harsh time step 
restrictions, when not constructed with care, 
and thus decrease the efficiency of the numerical method \cite{hesthaven2007nodal,glaubitz2019smooth}. 
Finally, we mention those methods based on order reduction \cite{baumann1999discontinuous,burbeau2001problem}, 
mesh adaptation \cite{dervieux2003theoretical}, weighted essentially non-oscillatory (WENO) concepts 
\cite{shu1988efficient,shu1989efficient}, and $\ell^1$ regularisation applied to high order approximations of the jump 
function \cite{glaubitz2019high}. 
Yet, a number of issues still remains unresolved. Often, the extension of these methods to multiple dimensions is 
an open question or they are regarded as too computational expensive.

% Bernstein procedure 
Here, we propose an approach that combines the good properties of SE approximations in smooth regions with the total variation diminishing and shape preserving properties of Bernstein approximations for resolving shocks without spurious oscillations. 
In this new strategy, the approximation of $u$ in each element may vary from usual (high-order) interpolation polynomials to a Bernstein reconstruction of the solution. 
Further, by employing a discontinuity sensor, here based on comparing polynomial annihilation operators 
\cite{archibald2005polynomial} of increasing orders as proposed in \cite{glaubitz2019high}, the order of the 
approximations is reduced to one only in elements where the solution is not smooth. 
For instance mesh adaptation is hence not mandatory and (shock) discontinuities can be captured without modifying the number of degrees of freedom, the mesh topology, or even the method. 
Moreover, a slight modification of the proposed Bernstein reconstruction also allows the preservation of 
bounds, such as positivity of pressure and water height. 

% Other works
By now, Bernstein polynomials as polynomial bases have been successfully applied to solvers for partial differential 
equations (PDEs) in a number of works. 
Especially in \cite{kirby2011fast,ainsworth2011bernstein}, efficient FE operators have been constructed by using 
Bernstein polynomials as shape functions. 
Also in \cite{beisiegel2015quasi}, Bernstein polynomials have been proposed as basis functions in a third-order 
quasi-nodal DG method for solving flooding and drying problems with shallow water equations. 
Finally, \cite{lohmann2017flux,anderson2017high} have investigated the potential of imposing discrete maximum principles for polynomials expressed in a basis of Bernstein polynomials. 
Yet, while Bernstein polynomials as basis functions have been studied in a few works by now, the associated approximation procedure resulting from the Bernstein operator is still of very limited use. 
In contrast to the above mentioned works, we do not only utilise Bernstein polynomials as basis 
functions, but we further propose to use the associated Bernstein operatore as a building stone for 
new shock capturing procedures.

% Outline 
The rest of this work is organised as follows. 
In \S \ref{sec:B-polynomials}, we revise bases of Bernstein polynomials and their associated Bernstein 
approximation operator. 
This operator will later be used to obtain 'smoother' reconstructions of polynomial approximations near discontinuities 
and provides certain structure-preserving and approximation properties. 
Building up on these properties, \S \ref{sec:Berstein-procedure} introduces the novel Bernstein procedure, 
which replaces polluted interpolation polynomial by convex combinations of the original (polluted) approximation 
and its Bernstein reconstruction. 
This process is steered by a polynomial annihilation sensor, which is discussed in \S \ref{sub:sensor}. 
\S \ref{sec:stability} investigates some analytical properties of the proposed Bernstein procedure, 
such as its effect on entropy, total variation and monotone (shock) profiles of the numerical solution. 
We stress that the Bernstein reconstruction is proven to be total variation diminishing (nonincreasing). 
Finally, \S \ref{sec:tests} provides numerical demonstrations for a series of different scalar test problems. 
We close this work with concluding thoughts in \S \ref{sec:conclusion}.
\section{Bernstein Polynomials and the Bernstein Operator}
\label{sec:B-polynomials}

In this section, we introduce Bernstein polynomials as well as some of their more important properties. 
On an interval $[a,b]$, the $N+1$ \emph{Bernstein basis polynomials of degree $N$} are defined as 
\begin{equation}
    b_{n,N}(x) = \binom{N}{n} \frac{(x-a)^n (b-x)^{N-n}}{(b-a)^n}, 
    \quad n = 0,\dots,N,
\end{equation}
and form a basis of $\mathbb{P}_N$. 
Thus, every polynomial of degree at most $N$ can be written as a linear combination of Bernstein basis polynomials, 
\begin{equation}
    B_N(x) = \sum_{n=0}^N \beta_n b_{n,N}(x), 
\end{equation}
called \emph{Bernstein polynomial} or \emph{polynomial in Bernstein form of degree $N$}. 
The coefficients are referred to as \emph{Bernstein coefficients} or \emph{B{\'e}zier coefficients}. 
We further define the linear \emph{Bernstein operator of degree $N$} for a function $u:[a,b] \to \R$ by 
\begin{equation}\label{eq:Bernstein-op}
    B_N[u](x) 
        = \sum_{n=0}^N u\left( a+\frac{n}{N}(b-a) \right) b_{n,N}(x).
\end{equation}
$B_N[\cdot]$ maps a function $u$ to a Bernstein polynomial of degree $N$ with Bernstein coefficients 
\begin{equation}
    \beta_n = u\left( a+\frac{n}{N}(b-a) \right).
\end{equation}
Without loss of generality, we can restrict ourselves to the intervals $[0,1]$ and $[-1,1]$. 
For sake of simplicity, the interval $[0,1]$ subsequently will be used for theoretical investigations. 
The interval $[-1,1]$, on the other hand, is typically used as a reference element in SE 
approximations. 
Thus, the proposed Bernstein procedure will be explained for this case.

\subsection{Structure-Preserving Properties}

Let us consider Bernstein polynomials on $[0,1]$.
We start by noting that the Bernstein basis polynomials form a partition of unity, i.\,e. 
\begin{equation}\label{eq:partition-of-unity}
    \sum_{n=0}^N b_{n,N}(x) = 1 
\end{equation} 
for all $N \in \N$. 
This is a direct consequence of the binomial theorem. 
Thus, we can immediately note 

\begin{lemma}\label{lem:boundary_pres}
    Let $B_N(x) = \sum_{n=0}^N \beta_n b_{n,N}(x)$ be a Bernstein polynomial of degree $N$ with Bernstein coefficients $\beta_1,\dots,\beta_N$.  
    Then 
    \begin{equation}
        m \leq \beta_n \leq M \quad \forall n=0,\dots,N 
        \quad \implies \quad
        m \leq B_N(x) \leq M 
    \end{equation}
    holds for the Bernstein polynomial. 
\end{lemma}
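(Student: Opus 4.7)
The plan is to exploit the two defining properties of the Bernstein basis on $[0,1]$: nonnegativity of each $b_{n,N}$ and the partition-of-unity identity \eqref{eq:partition-of-unity} that has just been established. Together these make $B_N(x)$ a convex combination of its Bernstein coefficients for every fixed $x$, which immediately yields the claimed enclosure.

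First I would observe that the Bernstein basis polynomials satisfy $b_{n,N}(x) \geq 0$ for all $x \in [0,1]$ and all $n=0,\dots,N$. This is an immediate consequence of the defining formula on $[a,b]=[0,1]$, namely $b_{n,N}(x) = \binom{N}{n} x^n (1-x)^{N-n}$, since each factor is nonnegative on $[0,1]$ and the binomial coefficient is a positive integer.

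Next, using the hypothesis $m \leq \beta_n \leq M$ for every $n$, I would bound the sum pointwise: multiplying the inequality by the nonnegative weight $b_{n,N}(x)$ preserves it, so
\begin{equation*}
  m \, b_{n,N}(x) \;\leq\; \beta_n \, b_{n,N}(x) \;\leq\; M \, b_{n,N}(x)
\end{equation*}
for every $n$ and every $x \in [0,1]$. Summing over $n=0,\dots,N$ and invoking the partition of unity \eqref{eq:partition-of-unity} gives
\begin{equation*}
  m \;=\; m \sum_{n=0}^N b_{n,N}(x) \;\leq\; \sum_{n=0}^N \beta_n b_{n,N}(x) \;=\; B_N(x) \;\leq\; M \sum_{n=0}^N b_{n,N}(x) \;=\; M,
\end{equation*}
which is exactly the assertion.

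There is no real obstacle here: the proof is essentially the observation that $B_N(x)$ is a convex combination of the $\beta_n$'s, so it must lie in the convex hull $[m,M]$ of the coefficients. The only care required is to note that nonnegativity of the basis functions holds on $[0,1]$ (the setting of this lemma), and the same argument transfers verbatim to $[-1,1]$ or any $[a,b]$ through the affine reparametrisation implicit in the definition of $b_{n,N}$.
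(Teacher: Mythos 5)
Your proof is correct and follows essentially the same route as the paper's: bound each term using the hypothesis and sum, invoking the partition of unity \eqref{eq:partition-of-unity}. You are in fact slightly more careful than the paper, since you explicitly note the nonnegativity of the basis polynomials $b_{n,N}$ on $[0,1]$, which is needed to multiply the inequalities by $b_{n,N}(x)$ and which the paper's proof leaves implicit.
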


\begin{proof}
    Let $m \leq \beta_n \leq M$ for all $n=0,\dots,N$. 
    We therefore have 
    \begin{equation}
        m \sum_{n=0}^N b_{n,N}(x) \leq \sum_{n=0}^N \beta_n b_{n,N}(x) 
        \leq M \sum_{n=0}^N b_{n,N}(x)
    \end{equation} 
    and the assertion follows from \eqref{eq:partition-of-unity}.
\end{proof}

In particular, Lemma \ref{lem:boundary_pres} ensures that the Bernstein operator \eqref{eq:Bernstein-op} 
preserves the bounds of the underlying function $u$. 
In fact, Lemma \ref{lem:boundary_pres} not only ensures preservation of bounds by the Bernstein procedure, 
but also allows us to enforce such bounds. 
Moreover, the Bernstein operator preserves the boundary values of $u$, i.\,e.  
\begin{equation}
    B_N[u](0) = u(0) \quad \text{and} \quad B_N[u](1) = u(1) 
\end{equation} 
hold. 
This makes the later proposed Bernstein procedure a reasonable shock-capturing method not just in discontinuous FE 
approximations, such as DG methods, but also in continuous FE approximations, where the numerical solution is required 
to be continuous across element interfaces. 
Further, we revise the formula 
\begin{equation}
    B_N^{(k)}[u](x) = 
        N(N-1) \dots (N-k+1) \sum_{n=0}^{N-k} \Delta^k u\left( \frac{n}{N} \right) \binom{N-k}{n} x^n (1-x)^{N-n-k} 
\end{equation} 
with forward difference operator 
\begin{equation}
    \Delta^k u\left( \frac{n}{N} \right) 
        = \Delta \left( \Delta^{k-1} u\left( \frac{n}{N} \right) \right) 
        = u\left( \frac{n+k}{N} \right) - \binom{k}{1} u\left( \frac{n+k-1}{N} \right) 
            + \dots + (-1)^k u\left( \frac{n}{N} \right)
\end{equation}
for derivatives of \eqref{eq:Bernstein-op}; see \cite[Chapter 1.4]{lorentz2012bernstein}.
In particular, we have 
\begin{equation}\label{eq:derivative}
    B_N'[u](x) = N \sum_{n=0}^{N-1} \left[ u\left( \frac{n+1}{N} \right) - u\left( \frac{n}{N} \right) \right] 
        \binom{N-1}{n} x^n (1-x)^{N-1-n}
\end{equation} 
for the first derivative of $B_N[u]$.
This formula will be important later in order to show that the Bernstein procedure is able to preserve monotone (shock) profiles.

\subsection{Approximation Properties}
\label{subsub:approx}

% Approximation of continuous functions 
Bernstein polynomials were first introduced by Bernstein \cite{bernstein1912demonstration} in a constructive proof of 
the famous Weierstrass theorem, which states that every continuous function on a compact interval can be approximated 
arbitrarily accurate by polynomials \cite{weierstrass1885analytische}. 
Hence, the sequence of Bernstein polynomials $\left( B_N[u] \right)_{N \in \N}$ converges uniformly to the 
continuous function $u$. 
Assuming that $u$ is bounded in $[0,1]$ and that the second derivative $u''$ exists at the point $x \in [0,1]$, 
we have
\begin{equation}\label{eq:Bernstein-approx}
    \lim_{N \to \infty} N \left( u(x) - B_N[u](x) \right) 
        = - \frac{x(1-x)}{2}u''(x) 
    \quad \text{for} \quad x \in [0,1];
\end{equation}
see \cite[chapter 1.6.1]{lorentz2012bernstein}. 
In particular, at points $x$ where the second derivative exists, the error of the Bernstein polynomial $B_N[u]$ 
therefore is of first order, i.\,e. 
\begin{equation}
  \left| u(x) - B_N[u](x) \right| \leq C N^{-1} u''(x) 
\end{equation}
for a $C > 0$. 
However, we should remember that solutions of scalar hyperbolic conservation laws 
-- which we intend to approximate -- 
might contain discontinuities. 
The structure of these solutions has been determined by 
Oleinik \cite{oleinik1957discontinuous,oleinik1964cauchy}, 
Lax \cite{lax1957hyperbolic}, 
Dafermos \cite{dafermos1977generalized}, 
Schaeffer \cite{schaeffer1973regularity}, 
Tadmor and Tassa \cite{tadmor1993piecewise}, 
and many more. 
Most notably for our purpose, Tadmor and Tassa \cite{tadmor1993piecewise} showed for scalar convex conservation laws 
that if the initial speed has a finite number of decreasing inflection points then it bounds the number of future shock 
discontinuities. 
Thus, in most cases the solution consists of a finite number of smooth pieces, each of which is as smooth as the 
initial data. 
Note that it is this type of regularity which is often assumed -- sometimes implicitly -- in the numerical treatment of 
hyperbolic conservation laws.
Hence, it appears to be more reasonable to investigate convergence of the sequence of Bernstein polynomials $\left( 
B_N[u] \right)_{N \in \N}$ for only piecewise smooth functions $u$. 
Here, we call a function $u$ \textit{piecewise $C^k$} on $[0,1]$ if there is a finite set of points 
${0 < x_1 < \dots < x_K < 1 }$ such that $\left. u \right|_{[0,1] \setminus \{x_1,\dots,x_K\}} \in C^k $ and if 
the one-sided limits 
\begin{equation}
  u( x_k^+ ) := \lim_{x \to x_k^+} u(x)
  \quad \text{and} \quad 
  u( x_k^- ) := \lim_{x \to x_k^-} u(x)  
\end{equation}
exist for all $k=1,\dots,K$. 
In this case $u$ is still integrable and the first order convergence of the Bernstein polynomials carries over in an 
$L^p$-sense.
\begin{theorem}
  Let $u$ be piecewise $C^2$ and let $\left( B_N[u] \right)_{N \in \N}$ be the sequence of corresponding Bernstein 
polynomials. 
  Then there is a constant $C > 0$ such that 
  \begin{equation}
    \left( \int_0^1 \left| u(x) - B_N[u](x) \right|^p \d x \right)^{\frac{1}{p}} 
      \leq C N^{-1}
  \end{equation}
  holds for all $N \in \N$ and $1 \leq p < \infty$.
\end{theorem}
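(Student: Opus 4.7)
The plan is to split the integration interval into a ``good'' set $G_N$ on which $u$ is locally $C^2$ and the Voronovskaya-type estimate \eqref{eq:Bernstein-approx} can be promoted to a uniform pointwise bound of order $N^{-1}$, and a ``bad'' set $J_N$ consisting of shrinking neighbourhoods of the finitely many exceptional points of $u$, on which we fall back on the crude $L^\infty$ bound supplied by Lemma \ref{lem:boundary_pres}. Concretely, if $x_1 < \dots < x_K$ denote the points of non-smoothness, I would fix a parameter $\delta_N > 0$ (to be optimised at the end) and write
\begin{equation*}
 [0,1] = G_N \cup J_N, \qquad J_N = \bigcup_{k=1}^K (x_k - \delta_N,\, x_k + \delta_N) \cap [0,1].
\end{equation*}

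On $G_N$, the second derivative $u''$ is uniformly bounded by the piecewise-$C^2$ hypothesis. To promote \eqref{eq:Bernstein-approx} to a uniform estimate, I would need to show that the Bernstein kernel $b_{n,N}(x)$ places only negligible mass at those sample nodes $n/N$ lying on the opposite side of a jump from $x \in G_N$. Since $b_{n,N}(\cdot)$ is the probability mass function of $X/N$ for $X \sim \mathrm{Binom}(N,x)$, with standard deviation of order $1/\sqrt{N}$, a Chernoff/Hoeffding-type tail bound controls this cross-jump contribution; combined with the one-sided Voronovskaya residual it yields a uniform pointwise bound $|u(x) - B_N[u](x)| \leq C_1 N^{-1}$ on $G_N$, and hence an $L^p(G_N)$ contribution of at most $C_1 N^{-1}$.

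On $J_N$, piecewise-$C^2$ together with the existence of the one-sided limits gives $\|u\|_\infty < \infty$, and Lemma \ref{lem:boundary_pres} propagates this to $B_N[u]$, so $|u - B_N[u]| \leq 2 \|u\|_\infty$ pointwise on $J_N$. The $L^p(J_N)$ contribution is therefore at most $2 \|u\|_\infty (2K \delta_N)^{1/p}$. Putting the two pieces together,
\begin{equation*}
 \|u - B_N[u]\|_{L^p([0,1])} \leq C_1 N^{-1} + 2 \|u\|_\infty (2K\delta_N)^{1/p},
\end{equation*}
and choosing $\delta_N$ as small as the cross-jump tail bound on $G_N$ permits produces the claimed $\|u - B_N[u]\|_{L^p} \leq C N^{-1}$, with $C$ depending on $p$, $K$, $\|u\|_\infty$, and the $C^2$ norms of the smooth pieces.

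The main obstacle is turning the pointwise, asymptotic identity \eqref{eq:Bernstein-approx} into a genuinely uniform estimate on the shrinking set $G_N$ with an explicit constant, while simultaneously making the cross-jump binomial tail small enough. Two competing requirements must be reconciled: the cross-jump mass pushes $\delta_N$ to be large (of order at least $\sqrt{\log N / N}$ if one wants polynomial decay), while the $L^p(J_N)$ term wants $\delta_N$ to be as small as $N^{-p}$ so that the $p$-th root of $\delta_N$ is of order $N^{-1}$. A careful quantitative version of Voronovskaya's theorem, or a direct analysis of the Bernstein operator applied to a decomposition $u = u_{\text{smooth}} + u_{\text{jumps}}$, is needed to close the gap; the remaining splitting and the $J_N$ bound are routine.
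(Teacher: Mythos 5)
Your proposal follows the same basic decomposition as the paper's own proof --- a ``good'' set where the Voronovskaya estimate \eqref{eq:Bernstein-approx} is supposed to give an $O(N^{-1})$ pointwise bound, plus shrinking neighbourhoods of the exceptional points handled by the crude $L^\infty$ bound via Lemma \ref{lem:boundary_pres} --- but you stop short of closing the argument, and the step you flag as the obstacle is in fact an obstruction, not a technicality. If $u$ has a genuine jump of height $h\neq 0$ at $x_1$, take $x=x_1-sN^{-1/2}$ with $s$ of order one: the cross-jump mass of the Bernstein kernel, $B_N[\mathbf{1}_{[x_1,1]}](x)=\mathbb{P}(X/N\geq x_1)$ with $X\sim\mathrm{Binom}(N,x)$, converges to $1-\Phi(2s\,(x_1(1-x_1))^{-1/2}/2)$ and so stays bounded away from $0$. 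Hence $|u(x)-B_N[u](x)|\geq c>0$ on an interval of length of order $N^{-1/2}$, which gives
\begin{equation*}
  \left(\int_0^1 |u(x)-B_N[u](x)|^p \,\mathrm{d}x\right)^{1/p} \geq c\, N^{-1/(2p)},
\end{equation*}
incompatible with the claimed $O(N^{-1})$ for every $p\geq 1$. In other words, the tension you identified between the binomial tail bound (forcing $\delta_N\gtrsim N^{-1/2}$ up to logarithms) and the $J_N$ contribution (needing $\delta_N\lesssim N^{-p}$) cannot be reconciled by any quantitative Voronovskaya theorem or splitting $u=u_{\mathrm{smooth}}+u_{\mathrm{jumps}}$: the statement itself fails for discontinuous piecewise-$C^2$ functions, and the true $L^p$ rate across a jump is $N^{-1/(2p)}$.

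For comparison, the paper's proof hits exactly the same wall and steps over it: it fixes $\varepsilon>0$, derives $\int_{x_k}^{x_{k+1}}|u-B_N[u]|^p\,\mathrm{d}x \leq (x_{k+1}-x_k-2\varepsilon)\,C^pN^{-p}+4\varepsilon\norm{u}_\infty$, and then lets $\varepsilon\to 0$ while treating $C$ as independent of $\varepsilon$. Since the constant in the uniform bound on $[x_k+\varepsilon,x_{k+1}-\varepsilon]$ necessarily blows up as $\varepsilon\to 0$ (both because $u''$ may be unbounded near $x_k$ and because of the cross-jump tail you analysed), that limit is not legitimate; your more careful bookkeeping makes the missing piece visible rather than hiding it. The honest conclusion is that the theorem needs either an additional hypothesis (e.g.\ $u\in C^2[0,1]$, or measuring the error only on compact subsets of the smoothness intervals) or a weaker, $p$-dependent rate; your plan as written cannot be completed to a proof of the stated claim.
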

\begin{proof}
  Let $0 < x_1 < \dots < x_k < 1$ be the points where $u$ is not $C^2$. 
  When further denoting $x_0=0$ and $x_{K+1} = 1$, we have 
  \begin{equation}
    \int_0^1 \left| u(x) - B_N[u](x) \right|^p \d x 
      = \sum_{k=0}^K \int_{x_k}^{x_{k+1}} \left| u(x) - B_N[u](x) \right|^p \d x
  \end{equation}
  and, since $u$ is piecewise $C^2$, there is a generic constant $C > 0$ such that 
  \begin{equation}\label{eq:proof_thw2}
    \left| u(x) - B_N[u](x) \right| \leq C N^{-1} u''(x) 
  \end{equation}
  holds for $x_k < x < x_{k+1}$. 
  It should be noted however that $u''$ might be unbounded on the open interval $(x_k,x_{k+1})$. 
  Therefore, let us choose $\varepsilon > 0$ such that $\varepsilon < \min_{k=0,\dots,K} |x_k - x_{k+1}|$ and 
  let us consider the closed interval $[x_k+\varepsilon,x_{k+1}-\varepsilon]$. 
  On these intervals $u''$ is bounded and \eqref{eq:proof_thw2} becomes 
  \begin{equation}
    \left| u(x) - B_N[u](x) \right| \leq C N^{-1}
  \end{equation}
  for $x_k+\varepsilon \leq x \leq x_{k+1}-\varepsilon$. 
  Hence, we have 
  \begin{equation}\label{eq:proof2_thw2}
  \begin{aligned}
    & \int_{x_k}^{x_{k+1}} \left| u(x) - B_N[u](x) \right|^p \d x \\
      & \quad = \int_{x_k}^{x_{k}+\varepsilon} \left| u(x) - B_N[u](x) \right|^p \d x \\ 
      & \qquad + \int_{x_{k}+\varepsilon}^{x_{k+1}-\varepsilon} \left| u(x) - B_N[u](x) \right|^p \d x 
	+ \int_{x_{k+1}-\varepsilon}^{x_{k+1}} \left| u(x) - B_N[u](x) \right|^p \d x \\ 
      & \quad = \int_{x_k}^{x_{k}+\varepsilon} \left| u(x) - B_N[u](x) \right|^p \d x 
	+ (x_{k+1}-x_k-2\varepsilon) C^p N^{-p} 
	+ \int_{x_{k+1}-\varepsilon}^{x_{k+1}} \left| u(x) - B_N[u](x) \right|^p \d x.
  \end{aligned}
  \end{equation}
  For the two remaining integrals, we remember that $u$ and therefore all $B_N[u]$ are bounded. 
  This yields 
  \begin{equation}
    \left| u(x) - B_N[u](x) \right| 
      \leq \left| u(x) \right| + \left| B_N[u](x) \right| 
      \leq 2 \norm{u}_\infty
  \end{equation}
  for all $x \in [0,1]$ and $N \in \N$. 
  As a consequence, \eqref{eq:proof2_thw2} reduces to 
  \begin{equation}
    \int_{x_k}^{x_{k+1}} \left| u(x) - B_N[u](x) \right|^p \d x 
      \leq (x_{k+1}-x_k-2\varepsilon) C^p N^{-p} + 4 \varepsilon \norm{u}_\infty
  \end{equation}
  and letting $\varepsilon \to 0$ results in 
  \begin{equation}
    \int_{x_k}^{x_{k+1}} \left| u(x) - B_N[u](x) \right|^p \d x 
      \leq (x_{k+1}-x_k) C^p N^{-p}.
  \end{equation}
  Finally, we have 
  \begin{equation}
  \begin{aligned}
    & \int_{x_k}^{x_{k+1}} \left| u(x) - B_N[u](x) \right|^p \d x 
      && \leq (x_{k+1} - x_k) C N^{-p} \\ 
    \implies & \int_{0}^{1} \left| u(x) - B_N[u](x) \right|^p \d x 
      && \leq C N^{-p} \\
    \implies & \left( \int_0^1 \left| u(x) - B_N[u](x) \right|^p \d x \right)^{\frac{1}{p}} 
      && \leq C N^{-1},
  \end{aligned} 
  \end{equation}
  which yields the assertion.
\end{proof}

One might reproach that this is an unacceptable order of convergence. 
We reply to this argument with a few selected counter-arguments.

\begin{remark}\label{rem:approx}
    \begin{itemize}
        \item 
        For numerical solutions of hyperbolic conservation laws, it is almost universally accepted that near shocks, the solution can be first order accurate at most \cite{persson2006sub}. 
        Thus, accuracy of the numerical solution won't decrease noticeably by reconstructing it as a Bernstein polynomial.  
        
        \item In fact, high-order methods often not even provide accuracy of first order but constant (or even decreasing) accuracy. 
        This is due to the Gibbs--Wilbraham phenomenon \cite{hewitt1979gibbs,richards1991gibbs,gottlieb1997gibbs} 
        for (polynomial) higher-order approximations of discontinuous functions. 
        Yet, for instance, Gzyl and Palacios \cite{gzyl2003approximation} have shown the absence of the 
Gibbs--Wilbraham phenomenon for Bernstein polynomials. 
        It is still an open problem which properties of the approximation cause the Gibbs--Wilbraham phenomenon, but it 
is our conjecture that the order of accuracy for smooth functions is the deciding factor.
        
        \item While Bernstein polynomials converge uniformly for every continuous function, for instance, 
        polynomial interpolation in general only converges if $u$ is at least (Dini--)Lipschitz continuous 
\cite{bernstein1912best}. 
        Even worse, Faber \cite{faber1914interpolatorische} showed in 1914 that no polynomial interpolation scheme 
        -- no matter how the points are distributed -- will converge for the whole class of continuous functions. 
        For approximations by orthogonal projection, such as Fourier series, a similar result follows from divergence 
        of the corresponding operator norms \cite{berman1958impossibility,petras1990minimal}. 
        
    \end{itemize}
\end{remark}

We conclude from Remark \ref{rem:approx} that Bernstein polynomials, while appearing not attractive for approximating 
sufficiently smooth functions, provide some advantages for the approximation of just continuous or even discontinuous 
functions.
\section{The Bernstein Procedure} 
\label{sec:Berstein-procedure} 

In this section, we introduce a novel sub-cell shock capturing procedure by using Bernstein polynomials. 
The procedure is described for the reference element $[-1,1]$ 
and is based on replacing polluted high-order approximations by their Bernstein reconstruction.

\subsection{Bernstein Reconstruction}
\label{sub:B-reconstruction}

We start by introducing the (modified) Bernstein reconstruction which is obtained by applying the Bernstein 
operator \eqref{eq:Bernstein-op} to a polynomial approximation. 
Let $u \in \mathbb{P}_N([-1,1])$ be an approximate solution at time $t \geq 0$ with coefficients 
$\vec{\hat{u}} \in \mathbb{R}^{N+1}$ w.\,r.\,t.\ a (nodal or modal) basis $\{ \varphi_n \}_{n=0}^N$ in the reference 
element $\Omega_{ref} = [-1,1]$. 
Then, the original approximation, for instance, obtained by interpolation or (pseudo) $L^2$-projection, is 
modified in the following way: 
\begin{enumerate}
    \item 
    Compute the \emph{Bernstein reconstruction of $u$} by   
    \begin{equation}\label{eq:Bernstein-reconstruction}
        B_N[u](x) 
            = \sum_{n=0}^N u\left( -1+2\frac{n}{N} \right) b_{n,N}(x).
    \end{equation}
    
    \item 
    Write $B_N[u]$ w.\,r.\,t.\ the basis $\{ \varphi_n \}_{n=0}^N$ by a change of bases with transformation 
matrix $\mat{T}$, i.\,e. 
    \begin{equation}\label{eq:transformation}
        \vec{u}^{(B)} = \mat{T} \vec{b}, 
    \end{equation}
    where $\vec{b}$ is a vector containing the Bernstein coefficients 
    $\beta_{n,N} = u\left( -1+2\frac{n}{N} \right)$.
\end{enumerate} 
To put it in a nutshell, the idea is to replace the coefficients $\vec{\hat{u}}$ in a troubled cell by the coefficients $\vec{u}^{(B)}$ of the Bernstein reconstruction. 
Table \ref{tab:cond} lists the condition numbers \cite{trefethen1997numerical} of the transformation matrix $\mat{T}$ 
for the Lagrange and Legendre bases. 
Thereby, the nodal basis of Lagrange polynomials is considered w.\,r.\,t.\ the Gauss--Lobatto points in $[-1,1]$.
For all reasonable polynomial degrees ($N \leq 10$) and both bases we observe the condition number to be fairly small. 
\renewcommand{\arraystretch}{1.5}
\begin{table}[htb]
  \centering
  \begin{adjustbox}{width=0.95\textwidth}
  \begin{tabular}{c cccccccccc}
    \multicolumn{11}{c}{$\mathrm{cond}(\mat{T})$} \\ 
    \toprule 
    $N$ & 1 & 2 & 3 & 4 & 5 & 6 & 7 & 8 & 9 & 10 \\ \hline 
    Lagrange 
      & $1.0 \times 10^{0}$ & $2.3 \times 10^{0}$ 
      & $4.4. \times 10^{0}$ & $8.6 \times 10^{0}$ 
      & $1.7 \times 10^{1}$ & $3.4 \times 10^{1}$ 
      & $6.7 \times 10^{1}$ & $1.3 \times 10^{2}$ 
      & $2.6 \times 10^{2}$ & $5.3 \times 10^{2}$ \\ \hline 
    Legendre & $1.0 \times 10^{0}$ & $1.9 \times 10^{0}$ 
      & $2.9 \times 10^{0}$ & $4.3 \times 10^{0}$ 
      & $5.4 \times 10^{0}$ & $7.7 \times 10^{0}$ 
      & $1.0 \times 10^{1}$ & $1.6 \times 10^{1}$ 
      & $2.4 \times 10^{1}$ & $4.1 \times 10^{1}$ \\ 
    \bottomrule
    \end{tabular}
    \end{adjustbox}
    \caption{Condition numbers of the transformation matrix $\mat{T}$ for the Lagrange (w.\,r.\,t.\ Gauss--Lobatto 
points) and Legendre bases. 
      The spectral norm $||\mat{T}|| = ||\mat{T}||_2 := \sqrt{ \lambda_{\mathrm{max}}((\mat{T}^*) \cdot \mat{T}) }$ has 
been used, where $\lambda_{\mathrm{max}}((\mat{T}^*) \cdot \mat{T})$ is the largest eigenvalue of the 
positive-semidefinite matrix $(\mat{T}^*) \cdot \mat{T}$. 
      This value is sometimes referred to as the largest singular value of the matrix $\mat{T}$.}
    \label{tab:cond}
\end{table}

Concerning the now obtained Bernstein reconstruction, Lemma \ref{lem:boundary_pres} does not only ensure preservation 
of bounds, but also allows us to enforce such bounds. 
Let us introduce the \emph{modified Bernstein reconstruction w.\,r.\,t.\ the lower bound $m$ and the upper bound $M$}
\begin{equation}\label{eq:mod_Bernstein_operator}
    B_N^{(m,M)}[u](x) = \sum_{n=0}^N \beta_n b_{n,N}(x) 
    \quad \text{with} \quad 
    \beta_n = 
    \renewcommand*{\arraystretch}{1.2}
    \left\{ 
    \begin{array}{ccc}
      u\left( -1 + 2\frac{n}{N} \right) & \text{, if } & m \leq u\left( -1 + 2\frac{n}{N} \right) \leq M \\ 
      m & \text{, if } & u\left( -1 + 2\frac{n}{N} \right) < m \\ 
      M & \text{, if } & u\left( -1 + 2\frac{n}{N} \right) > M
    \end{array}
    \right. .
\end{equation}
To the best of the authors' knowledge, the modified Bernstein operator \eqref{eq:mod_Bernstein_operator} has not been 
defined anywhere else yet.  
The most beautiful property of the modified Bernstein operator is preservation -- by default -- of lower and upper 
bounds $m$ and $M$, respectively. 
This follows directly from Lemma \ref{lem:boundary_pres} and is summarised in

\begin{theorem}
    Let $B_N^{(m,M)}$ be the modified Bernstein operator w.\,r.\,t.\ the lower bound $m$ and the upper bound $M$ given 
by \eqref{eq:mod_Bernstein_operator} and let $u$ be some function. 
    Then $B_N^{(m,M)}[u]$ fulfils 
    \begin{equation}
        m \leq B_N^{(m,M)}[u] \leq M.
    \end{equation}
\end{theorem}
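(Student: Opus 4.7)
The plan is to recognize this theorem as an almost immediate corollary of Lemma \ref{lem:boundary_pres}, since the modification in \eqref{eq:mod_Bernstein_operator} is precisely designed to clip the Bernstein coefficients into the interval $[m,M]$.

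First I would verify, by inspecting the three cases in \eqref{eq:mod_Bernstein_operator}, that the replacement coefficients $\beta_n$ satisfy $m \leq \beta_n \leq M$ for every $n=0,\dots,N$. In the first case, $\beta_n = u(-1+2n/N)$ lies in $[m,M]$ by the case condition itself. In the second case, $\beta_n = m$ trivially lies in $[m,M]$ (assuming $m \leq M$, which is implicit). The third case is analogous with $\beta_n = M$. So regardless of the values taken by $u$ at the Bernstein nodes, the coefficients of the modified reconstruction always belong to $[m,M]$.

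Next I would invoke Lemma \ref{lem:boundary_pres} directly: since $B_N^{(m,M)}[u]$ is a Bernstein polynomial of degree $N$ whose Bernstein coefficients all lie between $m$ and $M$, the lemma yields $m \leq B_N^{(m,M)}[u](x) \leq M$ for all $x$ in the reference interval. This completes the argument.

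There is essentially no obstacle here; the only thing to keep an eye on is that Lemma \ref{lem:boundary_pres} was stated on $[0,1]$, while the modified operator in \eqref{eq:mod_Bernstein_operator} lives on the reference element $[-1,1]$. Since the partition-of-unity property \eqref{eq:partition-of-unity} is an affine-invariant statement about the Bernstein basis and holds verbatim on $[-1,1]$ (via the evaluation points $-1+2n/N$), the proof of Lemma \ref{lem:boundary_pres} carries over unchanged. I would therefore either remark on this, or simply re-derive the two-line sandwich $m \sum_n b_{n,N}(x) \leq \sum_n \beta_n b_{n,N}(x) \leq M \sum_n b_{n,N}(x)$ in place and conclude via $\sum_n b_{n,N}(x) = 1$.
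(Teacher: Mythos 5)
Your proposal is correct and matches the paper's argument exactly: the paper gives no separate proof, stating only that the theorem ``follows directly from Lemma \ref{lem:boundary_pres}'' because the clipping in \eqref{eq:mod_Bernstein_operator} forces every Bernstein coefficient into $[m,M]$. Your additional remark about transferring the partition-of-unity argument from $[0,1]$ to the reference element $[-1,1]$ is a sensible point of care that the paper leaves implicit.
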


We close this subsection by noting that, for instance, positivity (think about density in the Euler equations), 
can be easily ensured by setting $m = \varepsilon$, 
where $\varepsilon > 0$ is a suitable value larger than machine precision.

\subsection{Proposed Procedure} 
\label{sub:procedure} 

Finally, we propose a procedure on how to replace the original polynomial approximation by its (modified) Bernstein 
reconstruction. 
Let $\Omega_i$ be a troubled element with approximate solution $u \in \mathbb{P}_N([-1,1])$ 
and let the coefficients of $u$ w.\,r.\,t.\ a (nodal or modal) basis $\{ \varphi_n \}_{n=0}^N$ be given by 
$\vec{\hat{u}} \in \mathbb{R}^{N+1}$.  
The \emph{Bernstein procedure} consists of two steps: 
\begin{enumerate}
    \item 
    Compute the (modified) Bernstein reconstruction $B_N[u]$ of $u$.
    
    \item 
    Build an 'appropriate' convex combination $u^{(\alpha)}$ of the original approximation $u$ and its Bernstein 
    reconstruction $B_N[u]$, i.\,e. 
    \begin{equation}\label{eq:alpha-B-reconstruction}
        u^{(\alpha)}(x) = \alpha u(x) + (1-\alpha) B_N[u](x) 
    \end{equation} 
    with $\alpha \in [0,1]$.
    
\end{enumerate} 
W.\,r.\,t.\ the original basis $\{ \varphi_n \}_{n=0}^N$, and therefore utilising the transformation 
\eqref{eq:transformation}, the \emph{$\alpha$ Bernstein reconstruction} $u^{(\alpha)} \in \mathbb{P}_N([-1,1])$ can be 
written as  
\begin{equation}
    u^{(\alpha)}(x) = \sum_{n=0}^N \left[ \alpha \hat{u}_n + (1-\alpha) u^{(B)}_n \right] \varphi_n(x). 
\end{equation}
Hence, its coefficients are given by 
\begin{equation}
    \vec{u}^{(\alpha)} = \alpha \vec{\hat{u}} + (1-\alpha) \vec{u}^{(B)}.  
\end{equation}
This procedure can be incorporated easily into an already existing solver. 
Note that we have $u^{(1)} = u$ and $u^{(0)} = u^{(B)}$ in the extreme cases of $\alpha = 1$ and $\alpha = 0$. 
Thus, the $\alpha$ Bernstein reconstruction $u^{(\alpha)}$ (linearly) varies between the original 
(and potentially oscillating or boundary violating) approximation $u$ and the more robust (modified) Bernstein 
reconstruction $B_N[u]$.
This is illustrated in Figure \ref{fig:illustrate_B-procedure} for the signum function $u(x) = \text{sign}(x)$, 
different parameters $\alpha$, and $N=1,5,9$.

\begin{figure}[!htb]
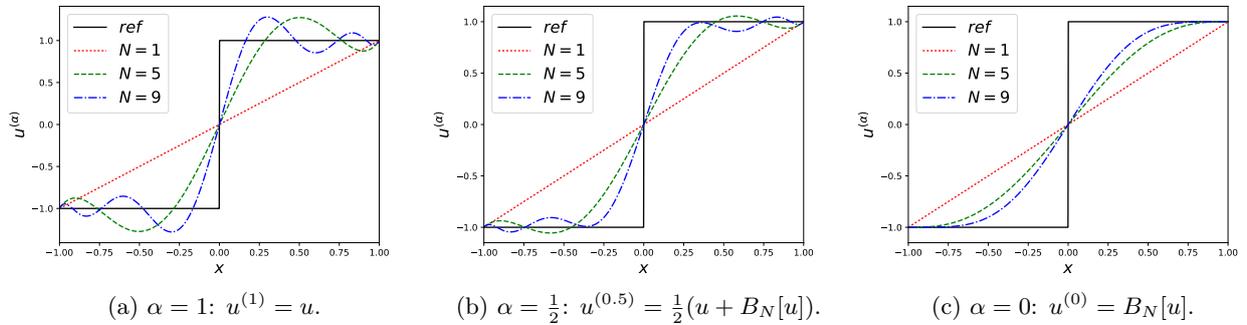

  \centering
  \begin{subfigure}[b]{0.33\textwidth}
    \includegraphics[width=\textwidth]{%
      illustrate_B-procedure_alpha1_ungN}
    \caption{$\alpha=1$: $u^{(1)} = u$.}
    \label{fig:alpha1} 
  \end{subfigure}%
  ~ 
  \begin{subfigure}[b]{0.33\textwidth}
    \includegraphics[width=\textwidth]{%
      illustrate_B-procedure_alpha05_ungN} 
    \caption{$\alpha=\frac{1}{2}$: $u^{(0.5)} = \frac{1}{2}(u + B_N[u])$.}
    \label{fig:alpha05}
  \end{subfigure}%
  ~ 
  \begin{subfigure}[b]{0.33\textwidth}
    \includegraphics[width=\textwidth]{%
      illustrate_B-procedure_alpha0_ungN} 
    \caption{$\alpha = 0$: $u^{(0)} = B_N[u]$.}
    \label{fig:alpha0}
  \end{subfigure}% 
  \caption{Illustration of the Bernstein reconstruction $u^{(\alpha)}(x) = \alpha u(x) + (1-\alpha) B_N[u](x)$ 
for $\alpha=1$ (original interpolation $u$), $\alpha=0.5$, and $\alpha = 0$ ('full' Bernstein reconstruction 
$B_N[u]$).}
  \label{fig:illustrate_B-procedure}
\end{figure} 

Obviously, the order of the approximation is reduced to one in elements where $\alpha < 1$. 
Yet, in elements with $\alpha = 1$, which define the large majority of the domain, high accuracy is retained. 
Subsequently, we demonstrate how the parameter $\alpha$ can be adapted to the regularity of the underlying solution.

\subsection{\texorpdfstring{Selection of Parameter $\alpha$}{Selection of Parameter Alpha}} 
\label{sub:alpha} 

The parameter $\alpha$ in \eqref{eq:alpha-B-reconstruction} is adapted to adequately capture different discontinuities and regions of smoothness in the solution. 
The value of $\alpha$ adjusts in space and time to accurately capture strong variations in the solution. 
Hence, the proposed Bernstein procedure is able to calibrate the polynomial approximation to the regularity of the 
solution. 
It should be stressed that modification of the mesh topology, the number of degrees of freedom, node positions, or the 
type of SE method is utterly unnecessary. 
This makes the Bernstein procedure an efficient and easy to implement shock capturing method. 
As described above, the extreme values $\alpha=1$ and $\alpha=0$ yield the original approximation $u$ and the Bernstein 
reconstruction $B_N[u]$, respectively. 
For intermediate parameter values $\alpha \in (0,1)$, the Bernstein reconstruction renders the convex combination  
$u^{(\alpha)} = \alpha u + (1-\alpha)u^{(B)}$ more and more robust. 
Thus, $\alpha$ allows us to adapt the amount of stabilisation introduced by the Bernstein reconstruction. 
Here, we chose $\alpha$ being a function of a discontinuity sensor as proposed in \cite{persson2006sub,huerta2012simple} and briefly revisited in the next subsection.

\subsection{Discontinuity Sensor} 
\label{sub:sensor} 

% Introduction
In this work, we use a discontinuity sensor proposed in \cite{glaubitz2019high} to detect troubled elements and to steer 
the parameter $\alpha$. 
This sensor is an element-based function leading to a single scalar measure of the solutions' smoothness. 
It is a nonlinear functional 
\begin{equation}
    S: \Oref \to \R, \quad s \mapsto S(s), 
\end{equation} 
which depends on a \emph{sensing variable} $s$. 
For systems, such as the Euler equations, the density or Mach number could be utilised for the sensing variable. 
Since we only consider scalar conservation laws in this work, $s$ is simply chosen as the conserved quantity $u$. 
The sensor was first proposed in \cite{glaubitz2019high} and is based on comparing polynomial annihilation (PA) 
operators \cite{archibald2005polynomial} of increasing orders. 
% PA operators
A \emph{PA operator of order $m$}, 
\begin{equation}
    L_m[s](x) = \frac{1}{q_m(x)} \sum_{\xi_j \in S_x} c_j(x) s(\xi_j), 
\end{equation} 
is designed as a high-order approximation of the jump function 
\begin{equation}
    [s](x) = s\left(x^+\right) - s\left(x^- \right)
\end{equation} 
of the sensing variable $s$. 
Here, $S_x = \{ \xi_0(x), \dots, \xi_m(x) \} \subset [-1,1]$ denotes a set of $m+1$ local grid points around $x$ and the so called \emph{annihilation coefficients} $c_j:[-1,1] \to \R$ are given by 
\begin{equation}
    \sum_{\xi_j \in S_x} c_j(x) p_l(\xi_j) = p_l^{(m)}, 
    \quad j=0,\dots,m, 
\end{equation}
where $\{ p_l \}_{l=0}^m$ is a basis of $\mathbb{P}_m$. 
An explicit formula for the annihilation coefficients is provided in \cite{archibald2005polynomial} by 
\begin{equation}
    c_j(x) = \frac{m!}{\omega_j(S_x)} 
    \quad \text{with} \quad 
    \omega_j(S_x) = \prod_{\begin{smallmatrix} \xi_i \in S_x \\ i \neq j \end{smallmatrix}} ( \xi_j - \xi_i ). 
\end{equation}
The \emph{normalisation factor} $q_m$ is chosen as 
\begin{equation}
    q_m(x) = \sum_{\xi_j \in S_x^+} c_j(x) 
    \quad \text{with} \quad 
    S_x^+ = \{ \xi_j \in S_x \ | \ \xi_j \geq x \}
\end{equation} 
and ensures convergence to the right jump strengths. 
% Convergence 
It has been proved in \cite{archibald2005polynomial} that 
\begin{equation}
    L_m[s](x) = 
    \left\{ 
    \begin{array}{ccl}
      [s](x) + \mathcal{O}\left(h(\xi)\right) & , & \text{if } \xi_{j-1} \leq x, \xi \leq \xi_j \\ 
      \mathcal{O}\left( h^{\min(m,k)}(\xi) \right) & , & \text{if } s \in C^k(I_x)
    \end{array}
    \right. ,
\end{equation} 
holds, where $h(\xi) = \max \{ |\xi_j - \xi_{j-1}| \ | \ \xi_{j-1},\xi_j \in S_x \}$ and $I_x$ is the smallest closed interval such that $S_x \subset I_x$. 
We note that PA might be further enhanced, for instance, by applying a minmod limiter as described in 
\cite{archibald2005polynomial}.

% Sensor value
Next, let the \emph{sensor value of order $m$} be given by 
\begin{equation}\label{eq:sensor-value}
    S_m = \max_{k=0,\dots,p-1} \left| L_m[s] ( x_{k+1/2} ) \right|, 
\end{equation}
i.\,e.\ by the greatest absolute value of the PA operator of order $m$ at the mid points of the collocation points. 
If $s$ has at least $m$ continuous derivatives, the PA operator $L_m$ provides convergence to $0$ of order $m+1$, 
and we therefore expect the sensor value \eqref{eq:sensor-value} to decrease for an increasing order $m$. 
In this case, a parameter value $\alpha=1$ ($\alpha > 0$) is chosen, which means that the Bernstein procedure is not (fully) activated. 
In this work, we only compare the sensor values of order $m=1$ and $m=3$ since it was endorsed in \cite{archibald2005polynomial} to use the same number of local grid points $\xi_j$ on both sides of a point $x$. 
Of course, a variety of modification is possible for the PA sensor. 
Now, the parameter value $\alpha = 0$ is chosen only if 
\begin{equation}\label{eq:PA-sensor}
    S(s) := \frac{S_3}{S_1} \geq 1
\end{equation} 
holds, i.\,e.\ if the sensor value does not decrease when going over from order $m=1$ to $m=3$. 
% Parameter function
Finally, we decide for the parameter $\alpha$ to linearly vary between $\alpha=1$ and $\alpha=0$. 
This is realised by the parameter function 
\begin{equation}\label{eq:param-fun}
    \alpha(S) =
    \left\{ 
    \begin{array}{ccl}
      1 & , & \text{if } S \leq \kappa \\ 
      \frac{1-S}{1-\kappa} & , & \text{if } \kappa < S < 1 \\ 
      0 & , & \text{if } 1 \leq S
    \end{array}
    \right. ,
\end{equation} 
where $\kappa \in (0,1)$ is a problem dependent \emph{ramp parameter}. 
Figure \ref{fig:param-fun} illustrates the parameter function w.\,r.\,t.\ the values of the PA sensor $S$ given by 
\eqref{eq:PA-sensor}. 

\begin{figure}[!htb]
  \centering
    \includegraphics[width=0.45\textwidth]{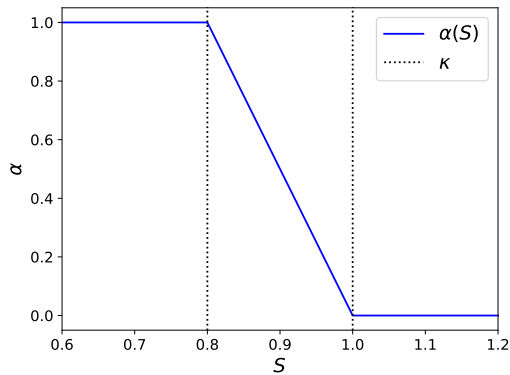}
  \caption{Parameter function $\alpha(S)$ as defined in \eqref{eq:param-fun}.}
   \label{fig:param-fun}
\end{figure}

For the later numerical tests we investigated different other parameter functions as well, 
of which some have been discussed in \cite{huerta2012simple}. 
Yet, we obtained the best results with \eqref{eq:param-fun}.
It should be noted that the above revisited PA sensor is only recommended for high orders $N \geq 4$. 
In our numerical test, we have observed some miss-identifications for $N=3$.
For $N \leq 3$ other discontinuity sensors could be used instead. 
We mention the modal-decay based sensor of Persson and Peraire 
\cite{persson2006sub,huerta2012simple} and its refinements \cite{barter2010shock,klockner2011viscous} as well as the 
KXRCF sensor \cite{krivodonova2004shock,qiu2005comparison} of Krivodonova et al., which is build up on a strong 
superconvergence phenomena of the DG method at outflow boundaries. 
Future work will address a detailed comparison of different shock sensors.

\begin{remark}\label{rem:param_tuning}
  The above sensor is simple to implement, but has its price, which is the introduction of the problem dependent and 
tunable parameter $\kappa \in (0,1)$ in \eqref{eq:param-fun}. 
  In practice, we follow a strategy proposed by Guermond, Pasquetti, and Popov for their \textit{entropy viscosity 
method for nonlinear conservation laws} \cite{guermond2011entropy}. 
  For a fixed polynomial degree $N$, the parameter $\kappa$ is tuned by testing the method on a coarse grid. 
  For all problems presented later in \S \ref{sec:tests}, the tuning has been done quickly on a coarse mesh of $I=10$ 
elements. 
  Further, we have observed the Bernstein procedure to be robust w.\,r.\,t.\ the parameter $\kappa$. 
  This will be demonstrated in \S \ref{subsub:linear_param} for the linear advection equation.
\end{remark}

\section{Entropy, Total Variation, and Monotone (Shock) Profiles}
\label{sec:stability}

In this section, we investigate some analytical properties of the proposed Bernstein procedure, such as its 
effect on entropy, total variation and monotone (shock) profiles of the numerical solution. 
For sake of simplicity, all subsequent investigations are carried out on the reference element $\Oref = [0,1]$. 

\subsection{Entropy Stability}
\label{sub:entropy}

Let $U \in C^1$ be a convex entropy function. 
We show that the proposed Bernstein reconstruction only yields a change in the total amount of 
entropy which is consistent with the total amount of the entropy of the original approximation $u$. 
This means that for increasing $N$ the total amount of entropy of the Bernstein reconstruction $B_N[u]$ converges to 
the total amount of entropy of the function $u$. 
Yet, even though the change of entropy is consistent, it does not always yield a decrease of the entropy. 
This is demonstrated by 
\begin{example}\label{es:entropy}
    Let $u(x) = x^2$ with Bernstein reconstruction $B_N[u](x) = x^2 + \frac{1}{N}x(1-x)$. 
    For the usual $L^2$-entropy $U(u)=u^2$, we have 
    \begin{equation}
        \begin{aligned}
            \int_0^1 U\left( u(x)\right) \d x & = \frac{1}{5}, \\
            \int_0^1 U\left( B_N[u](x) \right) \d x 
                & = \int_0^1 x^4 \d x + \int_0^1 \frac{2}{N}x^3(1-x) + \frac{1}{N^2}x^2(1-x)^2 \d x \\ 
                & = \int_0^1 U(u(x)) \d x + \frac{3N+1}{30N^2}.
        \end{aligned}
    \end{equation}
    Thus, the total amount of entropy is increased by applying the Bernstein procedure. 
\end{example}
%
% General 
In fact, Example \ref{es:entropy} is no exceptional case. 
We note that for every continuous and convex function $u$, the sequence of Bernstein reconstructions will 
converge to $u$ from above, i.\,e. 
\begin{equation}
    B_{N}[u](x) \geq B_{N+1}[u](x) \geq u(x) 
\end{equation} 
holds for all $N\geq 1$; see for instance \cite[Theorem 7.1.8 and 7.1.9]{phillips2003interpolation}. 
Hence, for $u \geq 0$, the $L^2$-entropy will be increased by the Bernstein procedure. 
Yet, we can further prove that the change of the total amount of entropy by the Bernstein reconstruction is consistent 
and vanishes for increasing $N$.
\begin{theorem}\label{thm:entropy}
  Let $U \in C^1$ be a convex entropy function and let $u$ be piecewise $C^2$. 
  Then, 
  \begin{equation}
    \lim_{N \to \infty} \int_0^1 U\left( B_N[u](x) \right) \d x 
      =  \int_0^1 U(u(x)) \d x
  \end{equation}
  holds. 
\end{theorem}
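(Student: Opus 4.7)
The plan is to reduce the convergence of entropy integrals to the previously established $L^1$-convergence of $B_N[u]$ to $u$, exploiting the local Lipschitz character of the $C^1$ entropy function $U$ on the relevant bounded range.

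First, I would establish uniform boundedness. Since $u$ is piecewise $C^2$ on $[0,1]$, it is bounded: there exist constants $m, M \in \R$ with $m \leq u(x) \leq M$ for all $x \in [0,1]$ (outside the finite set of discontinuities, and including the one-sided limits). By Lemma \ref{lem:boundary_pres}, the Bernstein reconstruction inherits these bounds, so $m \leq B_N[u](x) \leq M$ for every $x \in [0,1]$ and every $N \in \N$. Thus both $u$ and all $B_N[u]$ take values in the compact interval $[m,M]$.

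Next, I would exploit the regularity of $U$. Since $U \in C^1$, its derivative $U'$ is continuous and hence bounded on the compact set $[m,M]$; let $L := \max_{y \in [m,M]} |U'(y)| < \infty$. By the mean value theorem, $U$ is $L$-Lipschitz on $[m,M]$, so pointwise
\begin{equation}
    \bigl| U(B_N[u](x)) - U(u(x)) \bigr| \leq L \, \bigl| B_N[u](x) - u(x) \bigr|
\end{equation}
for every $x \in [0,1]$.

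Finally, I would integrate and invoke the preceding convergence theorem. Integrating the pointwise bound over $[0,1]$ and applying the theorem of \S \ref{subsub:approx} with $p=1$, there is a constant $C>0$ such that
\begin{equation}
    \left| \int_0^1 U(B_N[u](x)) \d x - \int_0^1 U(u(x)) \d x \right|
        \leq L \int_0^1 \bigl| B_N[u](x) - u(x) \bigr| \d x \leq L C N^{-1},
\end{equation}
which tends to zero as $N \to \infty$. This yields the claim. The only subtlety is the uniform boundedness step, which also conveniently shows that convexity of $U$ is not actually needed for the proof—only its $C^1$ regularity together with the $L^1$-approximation result previously established.
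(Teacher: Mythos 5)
Your proof is correct, but it takes a genuinely different route from the paper's. The paper establishes pointwise convergence $U(B_N[u](x)) \to U(u(x))$ almost everywhere via the Voronovskaya-type asymptotic \eqref{eq:Bernstein-approx}, notes that the sequence $U \circ B_N[u]$ is uniformly bounded (using Lemma \ref{lem:boundary_pres} together with boundedness of the continuous $U$ on $[m,M]$), and concludes with Lebesgue's dominated convergence theorem. You instead combine the same uniform-boundedness step with the local Lipschitz property of the $C^1$ function $U$ on the compact range $[m,M]$, and reduce the claim to the $L^p$-convergence theorem of \S \ref{subsub:approx} with $p=1$. The trade-off is clear: your argument delivers a quantitative rate $\mathcal{O}(N^{-1})$ for the convergence of the entropy integrals rather than bare convergence, which is strictly more information; on the other hand, it inherits whatever burden the earlier $L^1$ theorem carries (in particular the treatment of a possibly unbounded $u''$ near the break points), whereas the paper's dominated-convergence argument needs only the pointwise statement \eqref{eq:Bernstein-approx} and is self-contained modulo that. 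Your closing remark that convexity of $U$ is never used is accurate and applies equally to the paper's own proof, which likewise exploits only continuity and boundedness of $U$ on $[m,M]$.
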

\begin{proof}
  Since $U$ is $C^1$ and $u$ is piecewise $C^2$, equation \eqref{eq:Bernstein-approx} yields 
  \begin{equation}
    \lim_{N \to \infty} U(B_N[u](x)) = U(u(x)) 
    \quad \text{ for all } x \in [0,1] \setminus \{ x_1, \dots, x_k \}, 
  \end{equation}
  where $0<x_1< \dots < x_k < 1$ are the points where $u''$ does not exist. 
  Hence, $U \circ B_N[u]$ converges almost everywhere to $U \circ u$. 
  Further, $u$ and all $B_N[u]$ are uniformly bounded, let us say by $m$ and $M$, i.\,e. 
  \begin{equation}
    m \leq u(x), B_N[u](x) \leq M 
  \end{equation}
  for all $x \in [0,1]$. 
  Since $U$ is continuous, $U$ is also bounded on $[m,M]$ and there is a $v^* \in [m,M]$ such that 
  \begin{equation}
    \left| U(v) \right| \leq \left| U(v^*) \right| =: C \quad \forall v \in [m,M]. 
  \end{equation}
  Thus, we have 
  \begin{equation}
    \left| U(B_N[u](x)) \right| \leq C \quad \forall x \in [0,1], \ N \in \N,
  \end{equation}
  and the sequence $\left( U \circ B_N[u] \right)_{N \in \N}$ is uniformly bounded. 
  Finally, Lebesgue's dominated convergence theorem \cite[Chapter 1.3]{evans2018measure} yields 
  \begin{equation}
    \lim_{N \to \infty} \int_0^1 U\left( B_N[u](x) \right) \d x 
      =  \int_0^1 U(u(x)) \d x 
  \end{equation}
  and therefore the assertion. 
\end{proof}
Note that in the proposed Bernstein procedure, the Bernstein reconstruction is always computed for $u \in 
\mathbb{P}_N$. 
Yet, Theorem \ref{thm:entropy} holds for general $u$ which are piecewise $C^2$.

\subsection{Total Variation} 
\label{sub:TV}

% Motivation 
A fundamental property of the (exact) solution of a scalar hyperbolic conservation law \eqref{eq:cl}, 
assuming the initial data function $u_0(x)=u(0,x)$ has bounded variation, is that 
\cite{lax1973hyperbolic,toro2013riemann} 
\begin{enumerate}
    \item 
    no additional spatial local extrema occur. 
    
    \item 
    the values of local minima do not decrease and the values of local maxima do not increase. 
\end{enumerate}
As a consequence, the \emph{total variation (TV)} of the solution, 
\begin{equation}
    TV(u(t,\cdot)) = \sup_{J \in \N, \ 0=x_0 < \dots < x_{J} = 1} \sum_{j=0}^{J-1} \left| u(t,x_{j+1}) - 
u(t,x_j) \right|, 
\end{equation}
is a non-increasing function in time, i.\,e. 
\begin{equation}
    TV(u(t_2,\cdot)) \leq TV(u(t_1,\cdot)) 
\end{equation} 
for $t_2 \geq t_1$. 
In the presence of (shock) discontinuities, in fact, the TV typically decreases \cite{harten1984class}. 
We are thus interested in designing shock capturing methods which mimic this behaviour of being 
\emph{TV diminishing (TVD)}. 
% Bernstein procedure is TVD 
The proposed Bernstein procedure is now shown to fulfil the TVD property in the sense that the Bernstein 
reconstruction $B_N[u]$ of a function $u$ has a reduced (or the same\footnote{In all numerical tests, we actually 
observed the TV to decrease}) TV, i.\,e.  
\begin{equation}
    TV(B[u]) \leq TV(u)
\end{equation} 
holds for the Bernstein reconstruction \eqref{eq:Bernstein-reconstruction}. 
\begin{theorem}\label{thm:TVD}
    Let $B_N[u] \in \mathbb{P}_N$ be the Bernstein reconstruction of a function $u:[0,1] \to \R$. 
    Then, the TV of $B_N[u]$ is less or equal to the TV of $u$, and the Bernstein procedure is TVD. 
\end{theorem}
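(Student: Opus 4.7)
The plan is to exploit the explicit derivative formula \eqref{eq:derivative} for the Bernstein reconstruction, combined with the standard identity $TV(f) = \int_0^1 |f'(x)|\,\d x$ valid for any $C^1$ function on $[0,1]$. Since $B_N[u]$ is a polynomial, this representation is always applicable, so our starting point will be
\begin{equation*}
TV(B_N[u]) = \int_0^1 \left| N \sum_{n=0}^{N-1} \Delta_n \binom{N-1}{n} x^n (1-x)^{N-1-n} \right| \d x,
\end{equation*}
where we abbreviate $\Delta_n := u((n+1)/N) - u(n/N)$.

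Next I would apply the triangle inequality inside the integral. Because the Bernstein basis polynomials are nonnegative on $[0,1]$, the absolute value passes cleanly past them, yielding the bound
\begin{equation*}
TV(B_N[u]) \leq N \sum_{n=0}^{N-1} |\Delta_n| \int_0^1 \binom{N-1}{n} x^n (1-x)^{N-1-n} \d x.
\end{equation*}
The key computation then is the Beta-function identity $\int_0^1 \binom{N-1}{n} x^n (1-x)^{N-1-n} \d x = 1/N$, which follows from $B(n+1,N-n) = n!(N-n-1)!/N!$ and the definition of the binomial coefficient. Substituting this collapses the weighted sum to the telescoping quantity
\begin{equation*}
TV(B_N[u]) \leq \sum_{n=0}^{N-1} \left| u\left(\frac{n+1}{N}\right) - u\left(\frac{n}{N}\right) \right|.
\end{equation*}

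Finally, the right-hand side is precisely the variation of $u$ over the uniform partition $0 = 0/N < 1/N < \dots < N/N = 1$ of $[0,1]$, which by the very definition of total variation (as a supremum over all finite partitions) is at most $TV(u)$. Chaining the estimates gives $TV(B_N[u]) \leq TV(u)$, which is the TVD property.

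The derivation is essentially mechanical once the derivative formula \eqref{eq:derivative} is in hand; the only minor obstacle is verifying the Beta-integral identity, and making sure the argument is stated for functions $u$ of bounded variation (so that the uniform-partition sum is genuinely bounded by $TV(u)$). Since in the application $u \in \mathbb{P}_N$ is a polynomial, bounded variation holds automatically, but the statement extends without change to any $u:[0,1]\to\R$ of bounded variation.
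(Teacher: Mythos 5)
Your argument is correct and follows essentially the same route as the paper's own proof: both express $TV(B_N[u])$ as $\int_0^1 |B_N'[u](x)|\,\d x$, apply the triangle inequality using the derivative formula \eqref{eq:derivative} and the nonnegativity of the Bernstein basis, evaluate the Beta integral $\int_0^1 \binom{N-1}{n} x^n (1-x)^{N-1-n} \d x = 1/N$, and bound the resulting partition sum by $TV(u)$. Your write-up is in fact slightly cleaner, since the paper's displayed bound contains a typo ($u((n-1)/N)$ where $u((n+1)/N)$ is meant) and you correctly note the bounded-variation hypothesis needed for the final step.
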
 
\begin{proof}
    Since $B_N[u] \in \mathbb{P}_N$ and by consulting \eqref{eq:derivative}, we have 
    \begin{align}
        TV(B_N[u]) 
            & = \int_0^1 \left| B_N'(x) \right| \d x \\ 
            & \leq N \sum_{n=0}^{N-1} \left| u\left( \frac{n-1}{N} \right) - u\left( \frac{n}{N} \right) \right| 
               \binom{N-1}{n} \int_0^1 x^n (1-x)^{N-n-1} \d x,  
    \end{align}
    where the integrals are given by 
    \begin{equation}
        \int_0^1 x^n (1-x)^{N-n-1} \d x = N^{-1} \binom{N-1}{n}^{-1}. 
    \end{equation} 
    Thus, inequality 
    \begin{align}
        TV(B_N[u]) 
            \leq \sum_{n=0}^{N-1} \left| u\left( \frac{n-1}{N} \right) - u\left( \frac{n}{N} \right) \right| 
            \leq TV(u) 
    \end{align} 
    follows, and therefore the assertion. 
\end{proof}

\subsection{Monotone (Shock) Profiles} 
\label{sub:monotone}

% Gibbs phenomenon for the Bernstein reconstruction
Let $u:[0,1] \to \R$ be a piecewise smooth function with single discontinuity at $x_1 \in (0,1)$, 
representing a shock profile in a troubled element. 
It is desirable for the polynomial approximation of such a function to not introduce new (artificial) local 
extrema. 
Yet, typical polynomial approximations, such as interpolation and (pseudo) projections, 
are doing so by the Gibbs--Wilbraham phenomenon \cite{hewitt1979gibbs}. 
The Bernstein reconstruction, however, has been proved to not feature such spurious oscillations. 
This can, for instance, be noted from 
\begin{theorem}[Theorem 1.9.1 in Lorentz \cite{lorentz2012bernstein}]\label{thm:Gibbs}
    Suppose that $u$ is bounded in $[0,1]$ and let $L^+,L^-$ respectively denote the right and left upper limits and 
$l^+,l^-$ the right and left lower limits of $u$ at a point $x$.
    Then 
    \begin{equation}
        \frac{1}{2} \left( l^+ + l^- \right) 
            \leq \liminf_{N \to \infty} B_N[u](x) 
            \leq \limsup_{N \to \infty} B_N[u](x) 
            \leq \frac{1}{2} \left( L^+ + L^- \right).
    \end{equation}
\end{theorem}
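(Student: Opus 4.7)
The plan is to exploit the probabilistic interpretation of the Bernstein basis on $[0,1]$: for fixed $x \in (0,1)$ the weights $b_{n,N}(x) = \binom{N}{n} x^n (1-x)^{N-n}$ are precisely the probabilities $P(X_N = n)$ of a binomial random variable $X_N \sim \mathrm{Binomial}(N,x)$, so that
\begin{equation*}
  B_N[u](x) = \mathbb{E}\!\left[ u\!\left( \tfrac{X_N}{N} \right) \right].
\end{equation*}
Since $\mathbb{E}[X_N/N] = x$ and $\mathrm{Var}(X_N/N) = x(1-x)/N \to 0$, the mass of $X_N/N$ concentrates at $x$, and by the central limit theorem the limiting distribution is symmetric about $x$; this is what drives the $\tfrac12(\cdot^+ + \cdot^-)$ factors.

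First I would fix $\delta > 0$ and, using the definitions
$L^\pm = \inf_{\epsilon>0}\sup_{y\in I^\pm_\epsilon} u(y)$, $l^\pm = \sup_{\epsilon>0}\inf_{y\in I^\pm_\epsilon} u(y)$
with $I^-_\epsilon = (x-\epsilon,x)$ and $I^+_\epsilon = (x,x+\epsilon)$, choose $\epsilon > 0$ small enough that
\begin{equation*}
  l^- - \delta \leq u(y) \leq L^- + \delta \ \text{on } I^-_\epsilon,
  \qquad
  l^+ - \delta \leq u(y) \leq L^+ + \delta \ \text{on } I^+_\epsilon.
\end{equation*}
Then I would split the Bernstein sum into five pieces according to whether $n/N$ lies in $[0,x-\epsilon]$, $(x-\epsilon,x)$, $\{x\}$ (only relevant if $Nx\in\mathbb{N}$), $(x,x+\epsilon)$, or $[x+\epsilon,1]$, denoted $S_1,\dots,S_5$.

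Next I would control each piece. For the ``tail'' pieces $S_1$ and $S_5$, Chebyshev's inequality applied to $X_N/N$ gives
\begin{equation*}
  |S_1| + |S_5| \leq \|u\|_\infty\, P\!\left(\left|\tfrac{X_N}{N} - x\right| \geq \epsilon\right) \leq \frac{\|u\|_\infty\, x(1-x)}{N\epsilon^2} \longrightarrow 0.
\end{equation*}
The potential middle term $S_3$ contributes at most $\|u\|_\infty \, b_{\lfloor Nx\rfloor,N}(x)$, which is $O(N^{-1/2})$ by Stirling's formula, hence vanishes. For the two near pieces, the bounds on $u$ on $I^\pm_\epsilon$ give
\begin{equation*}
  (l^- - \delta)\,p_N^- \leq S_2 \leq (L^- + \delta)\,p_N^-,
  \qquad
  (l^+ - \delta)\,p_N^+ \leq S_4 \leq (L^+ + \delta)\,p_N^+,
\end{equation*}
where $p_N^\pm = P(X_N/N \in I^\pm_\epsilon)$.

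The main obstacle is showing $p_N^- \to \tfrac12$ and $p_N^+ \to \tfrac12$. This requires the central limit theorem: $\sqrt{N}(X_N/N - x)$ converges in distribution to $\mathcal{N}(0,x(1-x))$, so $P(X_N/N < x) \to \tfrac12$ and $P(X_N/N > x) \to \tfrac12$; combined with the tail estimate above, each $p_N^\pm$ tends to $\tfrac12$. Feeding this into the bounds on $S_2 + S_4$ and letting $N \to \infty$ gives
\begin{equation*}
  \tfrac12(l^- + l^+) - \delta \leq \liminf_{N\to\infty} B_N[u](x) \leq \limsup_{N\to\infty} B_N[u](x) \leq \tfrac12(L^- + L^+) + \delta,
\end{equation*}
and sending $\delta \to 0$ finishes the proof. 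The only subtle point beyond the CLT invocation is treating the endpoints $x=0$ and $x=1$, where one of the one-sided limits is vacuous and the claim reduces to $B_N[u](0)=u(0)$, $B_N[u](1)=u(1)$, which is immediate from the definition of $B_N$.
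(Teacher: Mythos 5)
The paper offers no proof of this statement: it is imported verbatim as Theorem~1.9.1 of Lorentz's monograph, so there is nothing in-paper to compare your argument against. Judged on its own, your proof is correct and is essentially the classical argument (and, in fact, the one Lorentz himself uses, with the de Moivre--Laplace theorem playing the role of your CLT invocation). The decomposition of $B_N[u](x)=\mathbb{E}[u(X_N/N)]$ into two Chebyshev-controlled tails, two one-sided pieces on $(x-\epsilon,x)$ and $(x,x+\epsilon)$, and the single atom at $n/N=x$ is exactly what is needed; the only genuinely nontrivial analytic input is $P(X_N/N<x)\to\tfrac12$ and $P(X_N/N>x)\to\tfrac12$, which you correctly isolate, and the $O(N^{-1/2})$ bound on $\max_n b_{n,N}(x)$ disposes of the atom so that $u(x)$ itself plays no role in the limit. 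Two minor points worth tightening if you write this out in full: (i) when passing from $S_2\geq(l^--\delta)p_N^-$ to $\liminf S_2\geq\tfrac12(l^--\delta)$ you should note that $p_N^-$ actually converges (not merely has a liminf), so the inequality holds irrespective of the sign of $l^--\delta$; and (ii) the statement only makes sense for $x\in(0,1)$, since at the endpoints one pair of one-sided limits is undefined, which you correctly flag.
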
 
Note that Theorem \ref{thm:Gibbs} is fairly general. 
The absence of the Gibbs--Wilbraham phenomenon (without taking convergence into account) could have been noted by Lemma 
\ref{lem:boundary_pres} already. 
% Monotone (shock) profiles 
It should be stressed that we can not just rule out spurious (Gibbs--Wilbraham) oscillations for the 
Bernstein reconstruction of discontinuous (shock) profiles, but we are further able to ensure the preservation of 
monotonicity. 
Let $u$ be a monotonic increasing (and possibly discontinuous) function on $[0,1]$. 
Then, the following lemma ensures that the Bernstein reconstruction $B_N[u]$ is monotonic increasing as well. 
\begin{lemma} 
    Let $u:[0,1] \to \R$ be monotonic increasing and let $B_N[u]$ denote the Bernstein reconstruction of $u$. 
    Then, $B_N[u]$ is monotonic increasing as well. 
\end{lemma}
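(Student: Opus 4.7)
The plan is to exploit the explicit derivative formula \eqref{eq:derivative} already recalled in Section \ref{sec:B-polynomials}. Since $B_N[u]$ is a polynomial, it is everywhere differentiable on $[0,1]$, and monotonicity can be established by simply checking that its derivative is nonnegative throughout the interval.

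First, I would apply formula \eqref{eq:derivative}, which expresses $B_N'[u](x)$ as a linear combination of terms of the form
\begin{equation}
\left[ u\!\left( \tfrac{n+1}{N} \right) - u\!\left( \tfrac{n}{N} \right) \right] \binom{N-1}{n} x^n (1-x)^{N-1-n}.
\end{equation}
Next, I would note that on $[0,1]$ the binomial coefficients $\binom{N-1}{n}$ and the Bernstein-type weights $x^n(1-x)^{N-1-n}$ are all nonnegative. The monotonicity hypothesis on $u$ then gives $u\!\left(\tfrac{n+1}{N}\right) - u\!\left(\tfrac{n}{N}\right) \geq 0$ for every $n = 0,\dots,N-1$. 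Summing nonnegative terms yields $B_N'[u](x) \geq 0$ for every $x \in [0,1]$, from which the monotonicity of $B_N[u]$ follows immediately by the fundamental theorem of calculus (applied to the polynomial $B_N[u]$).

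There is essentially no obstacle: all the analytical work is hidden in the derivative identity \eqref{eq:derivative}, and once that formula is invoked, the argument reduces to observing the sign of each summand. The only mild subtlety worth a sentence in the write-up is that $u$ itself need not be differentiable (indeed, the whole point of the Bernstein procedure is that $u$ may carry shock-type jumps), but this causes no issue because the derivative formula references only the \emph{sampled values} $u(n/N)$, and the resulting polynomial $B_N[u]$ is smooth regardless. By symmetry, an analogous argument (or an application to $-u$) shows that the Bernstein reconstruction of a monotonic decreasing function is monotonic decreasing as well, so the Bernstein procedure preserves monotone (shock) profiles in full generality.
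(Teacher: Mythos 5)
Your proposal is correct and follows essentially the same route as the paper: both invoke the derivative formula \eqref{eq:derivative}, observe that monotonicity of $u$ makes each forward difference $u\left(\frac{n+1}{N}\right) - u\left(\frac{n}{N}\right)$ nonnegative while the weights $\binom{N-1}{n} x^n (1-x)^{N-1-n}$ are nonnegative on $[0,1]$, and conclude $B_N'[u](x) \geq 0$. The extra remarks you add (that $u$ itself need not be differentiable, and the symmetric argument for decreasing profiles) are accurate but not needed beyond what the paper already states.
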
 
\begin{proof}
    Note that for monotonic increasing $u$, we have 
    \begin{equation}
        \Delta u \left( \frac{n}{N} \right) 
            = u \left( \frac{n+1}{N} \right) - u \left( \frac{n}{N} \right) 
            \geq 0.
    \end{equation} 
    Thus, by consulting \eqref{eq:derivative}, inequality
    \begin{equation}
        B_N'[u](x) \geq 0 
    \end{equation} 
    follows and as a result also the assertion. 
\end{proof} 
Note that the same result holds for monotonic decreasing (shock) profiles.
\section{Numerical Tests}
\label{sec:tests}

In this section, we test the Bernstein procedure incorporated into a nodal collocation-type discontinuous Galerkin 
finite element method (DGFEM) as described in \cite{hesthaven2007nodal}. 
For time integration, we have used the explicit TVD/SSP-RK method of third order using three stages 
(SSPRK(3,3)) given in \cite{gottlieb1998total} by Gottlieb and 
Shu: 
Let $u^n$ be the solution at time $t^n$, then the solution $u^{n+1}$ at time $t^{n+1}$ is obtained 
by 
\begin{equation}\label{eq:SSPRK33}
\begin{aligned}
  u^{(1)} & = u^n + \Delta t L\left( u^n \right), \\ 
  u^{(2)} & = \frac{3}{4} u^n + \frac{1}{4} u^{(1)} + \frac{1}{4} \Delta t L\left( u^{(1)} \right), 
\\ 
  u^{n+1} & = \frac{1}{3} u^n + \frac{2}{3} u^{(2)} + \frac{2}{3} \Delta t L\left( u^{(2)} \right),  
\end{aligned}
\end{equation}
where $L(u)$ is a discretisation of the spatial operator of the DGFEM.
For the time step size, we have used  
\begin{equation}
  \Delta t = C \cdot \frac{|\Omega|}{I (2N+1)^2 \max{|f'(u)|}}
\end{equation}
with $C = 0.1$ and where $\max{|f'(u)|}$ is calculated for all $u$ between $\min_{x \in \Omega} u_0(x)$ and 
$\max_{x \in \Omega} u_0(x)$ in all our numerical tests. 
Following \cite{cockburn1991runge} in parts, four different problems are investigated for which the exact 
solutions can be calculated.
Note that we assume periodic boundary conditions (BCs) in all numerical tests. 
This restriction is utterly unnecessary for the proposed Bernstein procedure and is only made in order to 
compactly provide reference solutions, which refer to the exact entropy solutions. 
Further, for every problem the local Lax--Friedrichs (Rusanov) flux 
\begin{equation}
    \fnum(u_-,u_+) = \frac{f(u_+) + f(u_-)}{2} - \frac{\lambda_{\text{max}}}{2} \left( u_+ - u_- \right)
\end{equation} 
has been applied, where 
$\lambda_{\text{max}} = \max_{u \in [u_-,u_+]}\left| f'(u) \right|$ 
is a locally determined viscosity coefficient based on maximum characteristics speed \cite[Chapter 
12.5]{leveque2002finite}.

\subsection{Linear Advection Equation}
\label{sub:linear}

Let us consider the linear advection equation 
\begin{equation}\label{eq:problem1}
  \partial_t u + \partial_x u = 0
\end{equation} 
on $\Omega=[0,1]$ with periodic BCs and a discontinuous initial condition (IC)  
\begin{equation}
  u_0(x) =
  \left\{ 
    \begin{array}{ccl}
      1 & , & 0.4 \leq x \leq 0.8 \\ 
      0 & , & \text{otherwise} \\
    \end{array}
    \right. .
\end{equation}
By the method of characteristics, the solution at time $t\geq0$ is given by 
\begin{equation}
  u(t,x) = u_0(x-t).
\end{equation}
The linear advection equation is the simplest PDE that can feature discontinuous solutions. 
Thus, the (shock capturing) method can be observed in a well-understood setting, isolated from nonlinear effects.
Yet, the linear advection equation provides a fairly challenging example. 
Similar to contact discontinuities in Euler's equations, discontinuities are not self-steeping, i.\,e.\ once such a discontinuity is smeared by the method, it can not be recovered to its original sharp shape 
\cite{kuzmin2006flux}.

\subsubsection{Parameter Study}
\label{subsub:linear_param}

We start by investigating the ramp parameter $\kappa \in (0,1)$, which goes into the PA sensor \eqref{eq:param-fun} and 
steers the Bernstein procedure.   
%
% FIGURE: fig:linear_param
%
Figure \ref{fig:linear_param} demonstrates the effect of the ramp parameter $\kappa \in (0,1)$ on the results produced 
by the Bernstein procedure for a linear advection equation with discontinuous IC. 
The IC has thereby been evolved over time until $t=1$. 
We observe the Bernstein procedure to be fairly robust w.\,r.\,t.\ the ramp parameter $\kappa$. 
On coarse meshes, such as $I=10$ and $I=20$, slight differences can be observed between different parameter values. 
Yet, these differences become less significant when the mesh is refined. 
As mentioned in Remark \ref{rem:param_tuning}, the tuning of the ramp parameter has been done quickly on a coarse mesh 
of $I=10$ elements for all problems presented.

\subsubsection{Comparison with Usual Filtering in DG Methods}
\label{subsub:linear_comp}

Next, we investigate the approximation properties of a DGFEM enhanced with the Bernstein procedure for the linear 
advection equation with discontinuous IC. 
Figure \ref{fig:linear_comp_T=1} illustrates the results at time $t=1$. 
%
% FIGURE: fig:linear_comp_T=1
%
Further, we compare our results with the DGFEM without any filtering and with a usual filtering technique, where the DG 
solution $u \in \mathbb{P}_N(\Omega_i)$ is replaced by its mean, 
\begin{equation}
  \overline{u} = \int_{\Omega_i} u(x) \intd x,
\end{equation}
in a troubled element $\Omega_i$. 
Troubled elements are detected by a critical value $S(u) \geq 1$, see \eqref{eq:PA-sensor}. 
Note that the usual filtering is therefore expected to be applied in less elements than the Bernstein procedure, since 
the Bernstein procedure is already activated for $S(u) > \kappa$, see \eqref{eq:param-fun}. 
Yet, we still observe the usual filtering to smear the numerical solution around discontinuities considerably. 
Over time, this smearing yields the numerical solution to nearly become constant. 
This can be observed in Figure \ref{fig:linear_comp_T=10}, where the results are further evolved in time until 
$t=10$.
%
% FIGURE: fig:linear_comp_T=10
%
At the same time, the results of the Bernstein procedure remain in their relatively sharp shape, even near 
discontinuities. 
Yet, no oscillations are observed, in contrast to the results produced by the DGFEM without any filtering. 
It should be stressed that this test case is especially challenging for the usual filtering by mean values, since the 
initial discontinuities have traveled trough the domain several times until $t=10$ is reached. 
The following tests might provide a fairer comparison. 
An extensive smearing of the numerical solution by the usual filtering by mean values, especially compared to the 
Bernstein procedure, is always observed to some extent, however.  
\begin{remark}
In Figure \ref{fig:linear_T=1_comp_N3_I80} (and Figure \ref{fig:buckley-leverett_T=025_I=40_N=3}) we note 
stronger oscillations for the DGFEM with mean value filtering than without any filtering. 
Typically, this is not excepted. 
A reason for this behaviour might be that the mean value filtering, when activated by the above sensor, is neither 
ensured to be TVD nor to preserve the relation between boundary values at element interfaces (while $u_- < u_+$ holds 
for the original approximation, mean value filtering in one or both elements connected by the interface might result 
in a reverse relation $\overline{u}_- > \overline{u}_+$). 
Such behaviour could be prevented by local projection limiters which are steered by modified minmod function; see
\cite{cockburn1991runge,cockburn1989tvb}.
\end{remark}

\subsection{Invicid Burgers' Equation}
\label{sub:Burgers} 

Let us now consider the nonlinear invicid Burgers' equation 
\begin{equation}\label{eq:problem2}
  u_t + \left( \frac{u^2}{2} \right)_x = 0 
\end{equation}
on $\Omega=[0,1]$ with smooth IC 
\begin{equation}
  u_0(x) = 1+ \frac{1}{4 \pi} \sin( 2 \pi x )
\end{equation}
and periodic BCs. 
For this problem a shock develops in the solution when the wave breaks at time 
\begin{equation}
  t_b = - \frac{1}{\min_{0 \leq x \leq 1} u_0'(x)} = 2.
\end{equation}
In the subsequent numerical tests, we consider the solution at times $t=2$ and $t=3$.
The reference solutions have been computed using characteristic tracing, solving the implicit equation $u(t,x) = 
u_0(x - tu)$ in smooth regions. 
The jump location, separating these regions, can be determined by the Rankine--Hugoniot condition.
%
% FIGURE: fig:burgers_comp_T=2
%
Figure \ref{fig:burgers_comp_T=2} illustrates the results at time $t=2$, at which the shock waves starts to arise. 
As a consequence, only slight differences are observed at this state. 
In particular, we can observe that the Bernstein procedure does not smear the solution in smooth regions, even when 
steep gradients arise. 
A slight smearing can be observed for the usual filtering by mean values around the location of the arising shock at 
$x=0.5$. 
%
% FIGURE: fig:burgers_comp_T=3
%
Figure \ref{fig:burgers_comp_T=3} illustrates the results at time $t=3$, for which the shock has fully developed and 
has already traveled through the whole domain once. 
While we observe oscillations for the DGFEM without filtering and smearing for usual filtering by mean values, the 
Bernstein procedure still provides sharp profiles for the discontinuous solution.

\subsection{A Concave Flux Function}
\label{sub:non-convex}

Next, we investigate the conservation law 
\begin{equation}\label{eq:problem3}
  u_t + \left( u(1-u) \right)_x = 0 
\end{equation}
with a concave flux function $f(u) = u(1-u)$ on $\Omega=[0,2]$, periodic BCs, and a discontinuous IC 
\begin{equation}
  u_0(x) =
  \left\{ 
    \begin{array}{ccl}
      1 & , & 0.5 \leq x \leq 1.5 \\ 
      0 & , & \text{otherwise} \\
    \end{array}
    \right. .
\end{equation}
For this problem a rarefaction wave develops in the solution. 
Figure \ref{fig:linear+burgers_comp_T=05} illustrates the results at time $t=0.5$.
%
% FIGURE: fig:linear+burgers_comp_T=05
%
Here, the DGFEM without any filtering fails to capture the rarefaction wave around $x=1$, yielding a 
wrong (physically unreasonable) weak solution. 
The usual filtering by mean values captures the rarefaction wave, but again smears the solution.
The Bernstein procedure is also  able to capture the rarefaction wave, while providing notably sharper profiles. 
Yet, we can observe some remaining slight oscillation for the Bernstein procedure in some cases. 
Remember that the Bernstein procedure is ensured to preserve bounds. 
Thus, the oscillations probably have been caused by the Bernstein procedure getting activated after 
oscillations have already been developed in the numerical solution by the DGFEM. 
Future work will investigate the possibility of using other shock sensors to steer the proposed Bernstein 
procedure in greater detail. 
These sensors might be activated once the TV increases over time or once the numerical solution 
starts to violate certain bounds.

\subsection{The Buckley--Leverett Equation}
\label{sub:Buckley-Leverett}

Finally, we consider the Buckley--Leverett equation 
\begin{equation}\label{eq:problem4}
  u_t + \left( \frac{u^2}{u^2 + (1-u)^2} \right)_x = 0 
\end{equation}
with a non-convex flux function $f(u) = \frac{u^2}{u^2 + (1-u)^2}$ on $\Omega=[0,2]$, 
periodic BCs, and a discontinuous IC 
\begin{equation}
  u_0(x) =
  \left\{ 
    \begin{array}{ccl}
      1 & , & 0.5 \leq x \leq 1.5 \\ 
      0 & , & \text{otherwise} \\
    \end{array}
    \right. .
\end{equation}
The Buckley--Leverett equation is often used to describe an immiscible displacement process, such as the displacement 
of oil by water \cite[Chapter 4.2]{randall1992numerical}. 
Due to its nonconvex flux function, the Riemann solution involves a so-called \textit{compound wave} (sometimes also 
referred to as a \textit{composite curve}), which contains a shock discontinuity and a rarefaction wave at the same 
time. 
For a nonlinear system of equations this can arise in any nonlinear field that fails to be genuinely nonlinear; 
see \cite[Chapter 16.1]{toro2013riemann} and \cite{wendroff1972riemann,wendroff1972riemann2,liu1975riemann}. 
The reference solution has been computed using characteristic tracing again.
Figure \ref{fig:buckley-leverett_comp_T=025} illustrates the results at time $t=0.5$.
%
% FIGURE: fig:buckley-leverett_comp_T=025
%
For this problem, the DGFEM without filtering provides fairly poor results, polluted by heavy oscillations, in all 
tests. 
In some cases for $N=3$ even the usual filtering by mean values shows some oscillations. 
This problem seems to be related to some miss-identifications of troubled elements by the PA sensor for $N=3$. 
For finer meshes the problem vanishes in case of the usual filtering by mean values. 
Yet, we still observe the Bernstein procedure to also perform relatively poor for $N=3$. 
For $N > 3$, the Bernstein procedure is again observed to provide notably better results than usual filtering by mean 
values. 
We conclude that the Bernstein procedure steered by the PA sensor \eqref{eq:param-fun} can only be recommended for 
higher orders $N \geq 4$. 
Yet, similar observations for the usual filtering by mean values indicate that the relatively poor behaviour for $N=3$ 
might be caused by the PA sensor, rather than by the Bernstein procedure itself. 

\section{Concluding Thoughts}
\label{sec:conclusion}

In this work, we have proposed a novel shock capturing procedure for SE approximations for scalar hyperbolic 
conservation laws. 
The procedure is easy to implement and neither increases the computational complexity of the method nor adds 
additional time step or CFL restrictions, as other common shock capturing methods do. 
The procedure essentially consists of going over from the original (oscillatory) polynomial approximation to its 
Bernstein reconstruction in troubled elements. 
Thereby, the Bernstein reconstruction of a function is obtained by applying the (modified) Bernstein operator 
\eqref{eq:mod_Bernstein_operator}. 
This operator has been proved to reduce the TV of the underlying function as well as to preserve monotone (shock) 
profiles. 
Both properties distinguish the shock capturing procedure as especially suitable for scalar conservation laws, 
which come along with a TVD property for their physically reasonable solutions. 
We further note that the modified procedure is not just able to preserve but even to enforce certain 
bounds for the numerical solution, such as positivity. 
Finally, the proposed procedure can be calibrated to the regularity of the underlying solution by using a discontinuity 
sensor which, in this work, is based on comparing the values of PA operators of increasing orders. 
Numerical tests demonstrate that the procedure is able to significantly enhance SE approximations in the presence of 
shocks. 

Future work will focus on the extension of the proposed Bernstein procedure to unstructured meshes in multiple 
dimensions and systems of conservation laws. 
Further, the investigation of other shock sensors would be of great interest. 
These sensors might be activated once the TV increases over time or once the numerical solution 
starts to violate certain bounds.  
\begin{figure}[!htb]
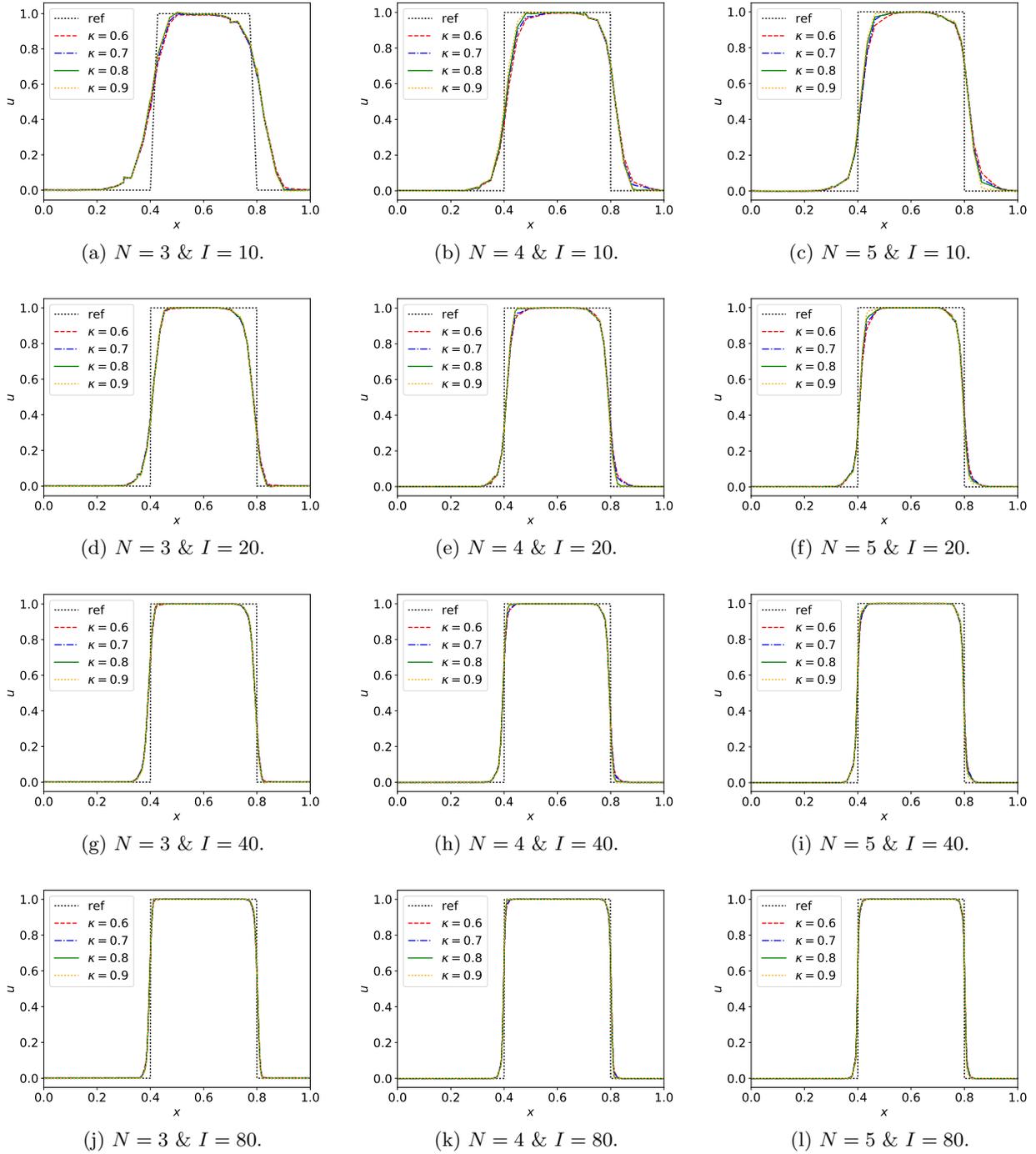

  \centering
  \begin{subfigure}[b]{0.33\textwidth}
    \includegraphics[width=\textwidth]{%
      param_linear_T=1_I=10_N=3}
    \caption{$N=3$ \& $I=10$.}
    \label{fig:linear_param_N3_I10} 
  \end{subfigure}%
  ~ 
  \begin{subfigure}[b]{0.33\textwidth}
    \includegraphics[width=\textwidth]{%
      param_linear_T=1_I=10_N=4}
    \caption{$N=4$ \& $I=10$.}
    \label{fig:linear_param_N4_I10} 
  \end{subfigure}%
  ~ 
  \begin{subfigure}[b]{0.33\textwidth}
    \includegraphics[width=\textwidth]{%
      param_linear_T=1_I=10_N=5}
    \caption{$N=5$ \& $I=10$.}
    \label{fig:linear_param_N5_I10} 
  \end{subfigure}%
  \\
  \begin{subfigure}[b]{0.33\textwidth}
    \includegraphics[width=\textwidth]{%
      param_linear_T=1_I=20_N=3}
    \caption{$N=3$ \& $I=20$.}
    \label{fig:linear_param_N3_I20} 
  \end{subfigure}%
  ~ 
  \begin{subfigure}[b]{0.33\textwidth}
    \includegraphics[width=\textwidth]{%
      param_linear_T=1_I=20_N=4}
    \caption{$N=4$ \& $I=20$.}
    \label{fig:linear_param_N4_I20} 
  \end{subfigure}%
  ~ 
  \begin{subfigure}[b]{0.33\textwidth}
    \includegraphics[width=\textwidth]{%
      param_linear_T=1_I=20_N=5}
    \caption{$N=5$ \& $I=20$.}
    \label{fig:linear_param_N5_I20} 
  \end{subfigure}%
  \\
  \begin{subfigure}[b]{0.33\textwidth}
    \includegraphics[width=\textwidth]{%
      param_linear_T=1_I=40_N=3}
    \caption{$N=3$ \& $I=40$.}
    \label{fig:linear_param_N3_I40} 
  \end{subfigure}%
  ~ 
  \begin{subfigure}[b]{0.33\textwidth}
    \includegraphics[width=\textwidth]{%
      param_linear_T=1_I=40_N=4}
    \caption{$N=4$ \& $I=40$.}
    \label{fig:linear_param_N4_I40} 
  \end{subfigure}%
  ~ 
  \begin{subfigure}[b]{0.33\textwidth}
    \includegraphics[width=\textwidth]{%
      param_linear_T=1_I=40_N=5}
    \caption{$N=5$ \& $I=40$.}
    \label{fig:linear_param_N5_I40} 
  \end{subfigure}%
  \\
  \begin{subfigure}[b]{0.33\textwidth}
    \includegraphics[width=\textwidth]{%
      param_linear_T=1_I=80_N=3}
    \caption{$N=3$ \& $I=80$.}
    \label{fig:linear_param_N3_I80} 
  \end{subfigure}%
  ~ 
  \begin{subfigure}[b]{0.33\textwidth}
    \includegraphics[width=\textwidth]{%
      param_linear_T=1_I=80_N=4}
    \caption{$N=4$ \& $I=80$.}
    \label{fig:linear_param_N4_I80} 
  \end{subfigure}%
  ~ 
  \begin{subfigure}[b]{0.33\textwidth}
    \includegraphics[width=\textwidth]{%
      param_linear_T=1_I=80_N=5}
    \caption{$N=5$ \& $I=80$.}
    \label{fig:linear_param_N5_I80} 
  \end{subfigure}%
  \caption{Parameter study for $\kappa$ for the linear advection equation \eqref{eq:problem1} at time $t=1$.}
  \label{fig:linear_param}
\end{figure} 

%%%%%%%%%%%%%%%%%%%%%%%%%%%%%

\begin{figure}[!htb]
  \centering
  \begin{subfigure}[b]{0.33\textwidth}
    \includegraphics[width=\textwidth]{%
      comp_linear_T=1_I=20_N=3}
    \caption{$N=3$ \& $I=20$.}
    \label{fig:linear_T=1_comp_N3_I20} 
  \end{subfigure}%
  ~ 
  \begin{subfigure}[b]{0.33\textwidth}
    \includegraphics[width=\textwidth]{%
      comp_linear_T=1_I=20_N=4}
    \caption{$N=4$ \& $I=20$.}
    \label{fig:linear_T=1_comp_N4_I20} 
  \end{subfigure}%
  ~ 
  \begin{subfigure}[b]{0.33\textwidth}
    \includegraphics[width=\textwidth]{%
      comp_linear_T=1_I=20_N=5}
    \caption{$N=5$ \& $I=20$.}
    \label{fig:linear_T=1_comp_N5_I20} 
  \end{subfigure}%
  \\
  \begin{subfigure}[b]{0.33\textwidth}
    \includegraphics[width=\textwidth]{%
      comp_linear_T=1_I=40_N=3}
    \caption{$N=3$ \& $I=40$.}
    \label{fig:linear_T=1_comp_N3_I40} 
  \end{subfigure}%
  ~ 
  \begin{subfigure}[b]{0.33\textwidth}
    \includegraphics[width=\textwidth]{%
      comp_linear_T=1_I=40_N=4}
    \caption{$N=4$ \& $I=40$.}
    \label{fig:linear_T=1_comp_N4_I40} 
  \end{subfigure}%
  ~ 
  \begin{subfigure}[b]{0.33\textwidth}
    \includegraphics[width=\textwidth]{%
      comp_linear_T=1_I=40_N=5}
    \caption{$N=5$ \& $I=40$.}
    \label{fig:linear_T=1_comp_N5_I40} 
  \end{subfigure}%
  \\ 
  \begin{subfigure}[b]{0.33\textwidth}
    \includegraphics[width=\textwidth]{%
      comp_linear_T=1_I=80_N=3}
    \caption{$N=3$ \& $I=80$.}
    \label{fig:linear_T=1_comp_N3_I80} 
  \end{subfigure}%
  ~ 
  \begin{subfigure}[b]{0.33\textwidth}
    \includegraphics[width=\textwidth]{%
      comp_linear_T=1_I=80_N=4}
    \caption{$N=4$ \& $I=80$.}
    \label{fig:linear_T=1_comp_N4_I80} 
  \end{subfigure}%
  ~ 
  \begin{subfigure}[b]{0.33\textwidth}
    \includegraphics[width=\textwidth]{%
      comp_linear_T=1_I=80_N=5}
    \caption{$N=5$ \& $I=80$.}
    \label{fig:linear_T=1_comp_N5_I80} 
  \end{subfigure}%
  \caption{Comparison of the Bernstein procedure %($\kappa = 0.95$) 
    in a DG method with a usual filtering technique and no filtering for the linear advection equation 
\eqref{eq:problem1} at time $t=1$.}
  \label{fig:linear_comp_T=1}
\end{figure}

%%%%%%%%%%%%%%%%%%%%%%%%%%%%%

\begin{figure}[!htb]
  \centering
  \begin{subfigure}[b]{0.33\textwidth}
    \includegraphics[width=\textwidth]{%
      comp_linear_T=10_I=20_N=3}
    \caption{$N=3$ \& $I=20$.}
    \label{fig:linear_T=10_comp_N3_I20} 
  \end{subfigure}%
  ~ 
  \begin{subfigure}[b]{0.33\textwidth}
    \includegraphics[width=\textwidth]{%
      comp_linear_T=10_I=20_N=4}
    \caption{$N=4$ \& $I=20$.}
    \label{fig:linear_T=10_comp_N4_I20} 
  \end{subfigure}%
  ~ 
  \begin{subfigure}[b]{0.33\textwidth}
    \includegraphics[width=\textwidth]{%
      comp_linear_T=10_I=20_N=5}
    \caption{$N=5$ \& $I=20$.}
    \label{fig:linear_T=10_comp_N5_I20} 
  \end{subfigure}%
  \\
  \begin{subfigure}[b]{0.33\textwidth}
    \includegraphics[width=\textwidth]{%
      comp_linear_T=10_I=40_N=3}
    \caption{$N=3$ \& $I=40$.}
    \label{fig:linear_T=10_comp_N3_I40} 
  \end{subfigure}%
  ~ 
  \begin{subfigure}[b]{0.33\textwidth}
    \includegraphics[width=\textwidth]{%
      comp_linear_T=10_I=40_N=4}
    \caption{$N=4$ \& $I=40$.}
    \label{fig:linear_T=10_comp_N4_I40} 
  \end{subfigure}%
  ~ 
  \begin{subfigure}[b]{0.33\textwidth}
    \includegraphics[width=\textwidth]{%
      comp_linear_T=10_I=40_N=5}
    \caption{$N=5$ \& $I=40$.}
    \label{fig:linear_T=10_comp_N5_I40} 
  \end{subfigure}%
  \\ 
  \begin{subfigure}[b]{0.33\textwidth}
    \includegraphics[width=\textwidth]{%
      comp_linear_T=10_I=80_N=3}
    \caption{$N=3$ \& $I=80$.}
    \label{fig:linear_T=10_comp_N3_I80} 
  \end{subfigure}%
  ~ 
  \begin{subfigure}[b]{0.33\textwidth}
    \includegraphics[width=\textwidth]{%
      comp_linear_T=10_I=80_N=4}
    \caption{$N=4$ \& $I=80$.}
    \label{fig:linear_T=10_comp_N4_I80} 
  \end{subfigure}%
  ~ 
  \begin{subfigure}[b]{0.33\textwidth}
    \includegraphics[width=\textwidth]{%
      comp_linear_T=10_I=80_N=5}
    \caption{$N=5$ \& $I=80$.}
    \label{fig:linear_T=10_comp_N5_I80} 
  \end{subfigure}%
  \caption{Comparison of the Bernstein procedure %($\kappa = 0.95$) 
    in a DG method with a usual filtering technique and no filtering for the linear advection equation 
\eqref{eq:problem1} at time $t=10$.}
  \label{fig:linear_comp_T=10}
\end{figure}

%%%%%%%%%%%%%%%%%%%%%%%%%%%%%

\begin{figure}[!htb]
  \centering
  \begin{subfigure}[b]{0.33\textwidth}
    \includegraphics[width=\textwidth]{%
      comp_burgers_T=2_I=20_N=3}
    \caption{$N=3$ \& $I=20$.}
    \label{fig:burgers_T=2_comp_N3_I20} 
  \end{subfigure}%
  ~ 
  \begin{subfigure}[b]{0.33\textwidth}
    \includegraphics[width=\textwidth]{%
      comp_burgers_T=2_I=40_N=4}
    \caption{$N=4$ \& $I=20$.}
    \label{fig:burgers_T=2_comp_N4_I20} 
  \end{subfigure}%
  ~ 
  \begin{subfigure}[b]{0.33\textwidth}
    \includegraphics[width=\textwidth]{%
      comp_burgers_T=2_I=20_N=5}
    \caption{$N=5$ \& $I=20$.}
    \label{fig:burgers_T=2_comp_N5_I20} 
  \end{subfigure}% 
  \\ 
  \begin{subfigure}[b]{0.33\textwidth}
    \includegraphics[width=\textwidth]{%
      comp_burgers_T=2_I=40_N=3}
    \caption{$N=3$ \& $I=40$.}
    \label{fig:burgers_T=2_comp_N3_I40} 
  \end{subfigure}%
  ~ 
  \begin{subfigure}[b]{0.33\textwidth}
    \includegraphics[width=\textwidth]{%
      comp_burgers_T=2_I=40_N=4}
    \caption{$N=4$ \& $I=40$.}
    \label{fig:burgers_T=2_comp_N4_I40} 
  \end{subfigure}%
  ~ 
  \begin{subfigure}[b]{0.33\textwidth}
    \includegraphics[width=\textwidth]{%
      comp_burgers_T=2_I=40_N=5}
    \caption{$N=5$ \& $I=40$.}
    \label{fig:burgers_T=2_comp_N5_I40} 
  \end{subfigure}% 
  \\ 
  \begin{subfigure}[b]{0.33\textwidth}
    \includegraphics[width=\textwidth]{%
      comp_burgers_T=2_I=80_N=3}
    \caption{$N=3$ \& $I=80$.}
    \label{fig:burgers_T=2_comp_N3_I80} 
  \end{subfigure}%
  ~ 
  \begin{subfigure}[b]{0.33\textwidth}
    \includegraphics[width=\textwidth]{%
      comp_burgers_T=2_I=80_N=4}
    \caption{$N=4$ \& $I=80$.}
    \label{fig:burgers_T=2_comp_N4_I80} 
  \end{subfigure}%
  ~ 
  \begin{subfigure}[b]{0.33\textwidth}
    \includegraphics[width=\textwidth]{%
      comp_burgers_T=2_I=80_N=5}
    \caption{$N=5$ \& $I=80$.}
    \label{fig:burgers_T=2_comp_N5_I80} 
  \end{subfigure}%
  \caption{Comparison of the Bernstein procedure %($\kappa = 0.95$) 
    in a DG method with a usual filtering technique and no filtering for the invicid Burgers equation 
    \eqref{eq:problem2} at time $t=2$.}
  \label{fig:burgers_comp_T=2}
\end{figure}

%%%%%%%%%%%%%%%%%%%%%%%%%%%%%

\begin{figure}[!htb]
  \centering
  \begin{subfigure}[b]{0.33\textwidth}
    \includegraphics[width=\textwidth]{%
      comp_burgers_T=3_I=20_N=3}
    \caption{$N=3$ \& $I=20$.}
    \label{fig:burgers_T=3_comp_N3_I20} 
  \end{subfigure}%
  ~ 
  \begin{subfigure}[b]{0.33\textwidth}
    \includegraphics[width=\textwidth]{%
      comp_burgers_T=3_I=40_N=4}
    \caption{$N=4$ \& $I=20$.}
    \label{fig:burgers_T=3_comp_N4_I20} 
  \end{subfigure}%
  ~ 
  \begin{subfigure}[b]{0.33\textwidth}
    \includegraphics[width=\textwidth]{%
      comp_burgers_T=3_I=20_N=5}
    \caption{$N=5$ \& $I=20$.}
    \label{fig:burgers_T=3_comp_N5_I20} 
  \end{subfigure}% 
  \\ 
  \begin{subfigure}[b]{0.33\textwidth}
    \includegraphics[width=\textwidth]{%
      comp_burgers_T=3_I=40_N=3}
    \caption{$N=3$ \& $I=40$.}
    \label{fig:burgers_T=3_comp_N3_I40} 
  \end{subfigure}%
  ~ 
  \begin{subfigure}[b]{0.33\textwidth}
    \includegraphics[width=\textwidth]{%
      comp_burgers_T=3_I=40_N=4}
    \caption{$N=4$ \& $I=40$.}
    \label{fig:burgers_T=3_comp_N4_I40} 
  \end{subfigure}%
  ~ 
  \begin{subfigure}[b]{0.33\textwidth}
    \includegraphics[width=\textwidth]{%
      comp_burgers_T=3_I=40_N=5}
    \caption{$N=5$ \& $I=40$.}
    \label{fig:burgers_T=3_comp_N5_I40} 
  \end{subfigure}% 
  \\ 
  \begin{subfigure}[b]{0.33\textwidth}
    \includegraphics[width=\textwidth]{%
      comp_burgers_T=3_I=80_N=3}
    \caption{$N=3$ \& $I=80$.}
    \label{fig:burgers_T=3_comp_N3_I80} 
  \end{subfigure}%
  ~ 
  \begin{subfigure}[b]{0.33\textwidth}
    \includegraphics[width=\textwidth]{%
      comp_burgers_T=3_I=80_N=4}
    \caption{$N=4$ \& $I=80$.}
    \label{fig:burgers_T=3_comp_N4_I80} 
  \end{subfigure}%
  ~ 
  \begin{subfigure}[b]{0.33\textwidth}
    \includegraphics[width=\textwidth]{%
      comp_burgers_T=3_I=80_N=5}
    \caption{$N=5$ \& $I=80$.}
    \label{fig:burgers_T=3_comp_N5_I80} 
  \end{subfigure}%
  \caption{Comparison of the Bernstein procedure %($\kappa = 0.95$) 
    in a DG method with a usual filtering technique and no filtering for the invicid Burgers equation 
    \eqref{eq:problem2} at time $t=3$.}
  \label{fig:burgers_comp_T=3}
\end{figure}

%%%%%%%%%%%%%%%%%%%%%%%%%%%%%

\begin{figure}[!htb]
  \centering
  \begin{subfigure}[b]{0.33\textwidth}
    \includegraphics[width=\textwidth]{%
      comp_linear+burgers_T=05_I=20_N=3}
    \caption{$N=3$ \& $I=20$.}
    \label{fig:linear+burgers_T=05_I=20_N=3} 
  \end{subfigure}%
  ~ 
  \begin{subfigure}[b]{0.33\textwidth}
    \includegraphics[width=\textwidth]{%
      comp_linear+burgers_T=05_I=20_N=4}
    \caption{$N=4$ \& $I=20$.}
    \label{fig:linear+burgers_T=05_I=20_N=4} 
  \end{subfigure}%
  ~ 
  \begin{subfigure}[b]{0.33\textwidth}
    \includegraphics[width=\textwidth]{%
      comp_linear+burgers_T=05_I=20_N=5}
    \caption{$N=5$ \& $I=20$.}
    \label{fig:linear+burgers_T=05_I=20_N=5} 
  \end{subfigure}%
  \\
  \begin{subfigure}[b]{0.33\textwidth}
    \includegraphics[width=\textwidth]{%
      comp_linear+burgers_T=05_I=40_N=3}
    \caption{$N=3$ \& $I=40$.}
    \label{fig:linear+burgers_T=05_I=40_N=3} 
  \end{subfigure}%
  ~ 
  \begin{subfigure}[b]{0.33\textwidth}
    \includegraphics[width=\textwidth]{%
      comp_linear+burgers_T=05_I=40_N=4}
    \caption{$N=4$ \& $I=40$.}
    \label{fig:linear+burgers_T=05_I=40_N=4} 
  \end{subfigure}%
  ~ 
  \begin{subfigure}[b]{0.33\textwidth}
    \includegraphics[width=\textwidth]{%
      comp_linear+burgers_T=05_I=40_N=5}
    \caption{$N=5$ \& $I=40$.}
    \label{fig:linear+burgers_T=05_I=40_N=5} 
  \end{subfigure}%
  \\
  \begin{subfigure}[b]{0.33\textwidth}
    \includegraphics[width=\textwidth]{%
      comp_linear+burgers_T=05_I=80_N=3}
    \caption{$N=3$ \& $I=80$.}
    \label{fig:linear+burgers_T=05_I=80_N=3} 
  \end{subfigure}%
  ~ 
  \begin{subfigure}[b]{0.33\textwidth}
    \includegraphics[width=\textwidth]{%
      comp_linear+burgers_T=05_I=80_N=4}
    \caption{$N=4$ \& $I=80$.}
    \label{fig:linear+burgers_T=05_I=80_N=4} 
  \end{subfigure}%
  ~ 
  \begin{subfigure}[b]{0.33\textwidth}
    \includegraphics[width=\textwidth]{%
      comp_linear+burgers_T=05_I=80_N=5}
    \caption{$N=5$ \& $I=80$.}
    \label{fig:linear+burgers_T=05_I=80_N=5} 
  \end{subfigure}%
  \caption{Comparison of the Bernstein procedure %($\kappa = 0.95$) 
    in a DG method with a usual filtering technique and no filtering for \eqref{eq:problem3} at time $t=0.5$.}
  \label{fig:linear+burgers_comp_T=05}
\end{figure}

%%%%%%%%%%%%%%%%%%%%%%%%%%%%%

\begin{figure}[!htb]
  \centering
  \begin{subfigure}[b]{0.33\textwidth}
    \includegraphics[width=\textwidth]{%
      comp_buckley-leverett_T=025_I=20_N=3}
    \caption{$N=3$ \& $I=20$.}
    \label{fig:buckley-leverett_T=025_I=20_N=3} 
  \end{subfigure}%
  ~ 
  \begin{subfigure}[b]{0.33\textwidth}
    \includegraphics[width=\textwidth]{%
      comp_buckley-leverett_T=025_I=20_N=4}
    \caption{$N=4$ \& $I=20$.}
    \label{fig:buckley-leverett_T=025_I=20_N=4} 
  \end{subfigure}%
  ~ 
  \begin{subfigure}[b]{0.33\textwidth}
    \includegraphics[width=\textwidth]{%
      comp_buckley-leverett_T=025_I=20_N=5}
    \caption{$N=5$ \& $I=20$.}
    \label{fig:buckley-leverett_T=025_I=20_N=5} 
  \end{subfigure}%
  \\ 
  \begin{subfigure}[b]{0.33\textwidth}
    \includegraphics[width=\textwidth]{%
      comp_buckley-leverett_T=025_I=40_N=3}
    \caption{$N=3$ \& $I=40$.}
    \label{fig:buckley-leverett_T=025_I=40_N=3} 
  \end{subfigure}%
  ~ 
  \begin{subfigure}[b]{0.33\textwidth}
    \includegraphics[width=\textwidth]{%
      comp_buckley-leverett_T=025_I=40_N=4}
    \caption{$N=4$ \& $I=40$.}
    \label{fig:buckley-leverett_T=025_I=40_N=4} 
  \end{subfigure}%
  ~ 
  \begin{subfigure}[b]{0.33\textwidth}
    \includegraphics[width=\textwidth]{%
      comp_buckley-leverett_T=025_I=40_N=5}
    \caption{$N=5$ \& $I=40$.}
    \label{fig:buckley-leverett_T=025_I=40_N=5} 
  \end{subfigure}%
  \\
  \begin{subfigure}[b]{0.33\textwidth}
    \includegraphics[width=\textwidth]{%
      comp_buckley-leverett_T=025_I=80_N=3}
    \caption{$N=3$ \& $I=80$.}
    \label{fig:buckley-leverett_T=025_I=80_N=3} 
  \end{subfigure}%
  ~ 
  \begin{subfigure}[b]{0.33\textwidth}
    \includegraphics[width=\textwidth]{%
      comp_buckley-leverett_T=025_I=80_N=4}
    \caption{$N=4$ \& $I=80$.}
    \label{fig:buckley-leverett_T=025_I=80_N=4} 
  \end{subfigure}%
  ~ 
  \begin{subfigure}[b]{0.33\textwidth}
    \includegraphics[width=\textwidth]{%
      comp_buckley-leverett_T=025_I=80_N=5}
    \caption{$N=5$ \& $I=80$.}
    \label{fig:buckley-leverett_T=025_I=80_N=5} 
  \end{subfigure}%
  \caption{Comparison of the Bernstein procedure %($\kappa = 0.7$) 
    in a DG method with a usual filtering technique and no filtering for \eqref{eq:problem4} at time $t=0.25$.}
  \label{fig:buckley-leverett_comp_T=025}
\end{figure} 

%%%%%%%%%%%%%%%%%%%%%%%%%%%%%

\section*{Acknowledgements}
The author would like to thank Thomas Sonar (TU Braunschweig, Germany) for helpful 
advice. 
%Further, the author would like to thank the anonymous reviewers for some helpful comments, resulting in an improved 
%presentation of this material. 
This work is supported by the German Research Foundation (DFG, Deutsche Forschungsgemeinschaft) under Grant SO 
363/15-1.

\bibliographystyle{abbrv}
\bibliography{literature}

\end{document}